\newtheorem{theorem}{Theorem}[section]
\newtheorem{lemma}[theorem]{Lemma}
\newtheorem{example}[theorem]{Example}
\newtheorem{remark}{Remark}[section]
\begin{document}
%
\title{Placement Delivery Arrays from Combinations of Strong Edge Colorings}
%
%
%

\author{Jerod Michel and Qi Wang,~\IEEEmembership{Member,~IEEE}
\thanks{J. Michel is with the Department of Mathematics, Nanjing Univeristy of Aeronautics and Astronautics, Nanjing, Jiangsu, China, michel@nuaa.edu.cn.}\thanks{Q. Wang is with the Department of Computer Science and Engineering, Southern University of Science and Technology, Shenzhen, Guangdong, China, wangqi@sustech.edu.cn.}\thanks{The authors were supported in part by the National Natural Science Foundation of China under Grant No. 61672015.}}
\maketitle

\begin{abstract}
It has recently been pointed out in both of the works [C. Shanguan, Y. Zhang, and G. Ge, {\em IEEE Trans. Inform. Theory}, 64(8):5755-5766 (2018)] and [Q. Yan, X. Tang, Q. Chen, and M. Cheng, {\em IEEE Commun. Lett.}, 22(2):236-239 (2018)] that placement delivery arrays (PDAs), as coined in [Q. Yan, M. Cheng, X. Tang, and Q. Chen, {\em IEEE Trans. Inform. Theory},  63(9):5821-5833 (2017)], are equivalent to strong edge colorings of bipartite graphs. In this paper we consider various methods of combining two or more strong edge colorings of bipartite graphs to obtain new ones, and therefore new PDAs. Combining PDAs in certain ways also gives a framework for obtaining PDAs with more robust and flexible parameters. We investigate how the parameters of certain strong edge colorings change after being combined with others and, after comparing the parameters of the resulting PDAs with those of the initial ones, find that subpacketization levels thusly can often be improved.
\end{abstract}

\begin{IEEEkeywords}
Centralized coded caching, placement delivery array, strong edge coloring, bipartite graph.
\end{IEEEkeywords}

%
\IEEEpeerreviewmaketitle

\section{Introduction}\label{sec1}
A dramatic increase in the demand for video delivery via wireless networks is now one of the main driving factors behind the study of centralized coded caching. The prevalent scenario can be described as a group of users, each of whom connects to a server with a substantial library of files and, at various points in time, demands specific files from the server. Excessive, simultaneous demands can lead to the jamming up of networks. Maddah-Ali and Niesen, in \cite{AN}, proposed the {\it centralized coded caching scheme} as a solution to this problem. The central idea is to design an appropriate content placement strategy where, in the delivery phase, the demands of users can be met with a relatively low number of multicast transmissions. Users should be able to use the contents broadcast in the delivery phase together with those stored in their local caches to recover the requested files.
\par
Assuming that there are $N$ (unit-size) files and $K$ users, each of which has a local cache of size $M$ (units), each file can be split up into $F$ packets, and the total transmission amount in the delivery phase, called the {\it rate}, is denoted by $R$. The main indicators when evaluating the performance of a coded caching scheme are $R$ and $F$, i.e., for a fixed ratio $M/N$, the behavior of $R$ and $F$ should be treated as functions of $K$. In most existing coded caching schemes, $F$ increases (usually exponentially) with the number of users $K$. This is less than practical when $K$ is large, thereby making it worthwhile to consider caching schemes that require a smaller rate of increase of $F$ as a function of $K$. There have been several recent works on coded caching schemes, see for example \cite{JI}, \cite{KAR}, \cite{NMA}, \cite{PED},\cite{SHANM}.
\par
Yan et al., in \cite{YAN0}, represent the coded caching scheme by a single array which they call a {\em placement delivery array} (PDA). In short, a PDA can be thought of as having two phases. The placement phase splits files into packets and places them into the local caches of individual users. The delivery phase shows each user what has been cached as well as what should be transmitted. The problem of designing a coded caching scheme thus becomes that of designing the appropriate PDA for some given parameters. Although the schemes proposed by Yan et al. in \cite{YAN0} have significantly lower subpacketization levels than that of the Ali-Niesen schemes (i.e. $F$ grows at a much slower rate with respect to $K$) \cite{AN} at the cost of a slight increase in rate, $F$ still increases exponentially with $K$. Several new methods for constructing PDAs have since been reported where the problem of reducing subpacketization levels is considered.
\par
Two methods for constructing PDAs are especially relevant to this paper. In \cite{CHONG}, Shanguan, Zhang and Ge discerned an important connection between PDAs and $3$-uniform, $3$-partite hypergraphs with the $(6,3)$-free property as well as construct PDAs for which $F$ increases subexponentially with $K$. This connection was further expanded upon in terms of strong edge colorings of bipartite graphs by Yan et al. in \cite{YAN1}, thereby bridging the study of strong edge colorings (and the strong chromatic index of graphs) and PDAs. There have been other recent works concerning the problem of reducing subpacketization levels as well. In \cite{SHANM0} and \cite{SHANM1} Shanmugam et al. constructed coded caching schemes with reduced subpacketization levels using Ruzsa-Szemeredi graphs, in \cite{TANG} Tang and Ramamoorthy discuss an approach using linear block codes, in \cite{KRISH} Krishnan discussed an approach using line graphs of bipartite graphs, and in \cite{AGR} Agrawal used combinatorial designs to construct coded caching schemes with reduced subpacketization. There have also been some attempts at approaching this problem using other strategies, e.g., see \cite{LAMP} and \cite{OZ}.
\par
The present paper investigates the prospect of constructing new PDAs by taking different types of products of their underlying strong-edge colored bipartite graphs. We not only obtain PDAs with new parameters, but we also show how it is possible to reduce subpacketization levels by such methods. By examining the effects (and the trade-offs therein) on the parameters of existing PDAs after their underlying strong edge colorings have been combined in certain ways with others, we give an initial framework for improving subpacketization levels.
\par
The remainder of this paper is organized as follows. In Section \ref{sec2} we briefly review coded caching shemes, the PDA design introduced in \cite{YAN0} as well as its connection with strong edge colorings of bipartite graphs which was shown in \cite{YAN1} (and indirectly in \cite{CHONG}). In Section \ref{sec3} we discuss combining strong edge colorings in certain ways to obtain new strong edge colorings. In Section \ref{sec4} we look at the construction of PDAs through strong edge colorings of tensor-like products of certain types of graphs. Section \ref{sec5} discusses the construction of PDAs through strong edge colorings of strong-like products of certain types of graphs. In Section \ref{sec6} we compare the parameters of some PDAs to those of the resulting PDA after the former has been combined with another in one of the ways discussed in this paper, and Section \ref{sec7} concludes the paper.
\section{Preliminaries}\label{sec2}
Here we briefly review the necessary concepts related to the centralized coded caching scheme model, PDAs and strong edge colorings of bipartite graphs.
\subsection{Centralized Coded Caching Scheme}
Here we recall the centralized coded caching scheme introduced in \cite{AN}. A caching system with a single server is connected to $K$ users, denoted by $\mathcal{K}=\{1,...,K\}$, via an error-free shared link. In the server are stored $N$ ($N\geq K$) files, denoted by $\mathcal{W}=\{W_{1},...,W_{N}\}$, each of which is assumed to be of unit size. Each user has a cache of size $M$ units ($0\leq M\leq N$). The illustration of such a scheme, which can also be found in many previous papers, is given in Fig. \ref{fig0}.
\begin{figure}[!htb]
\begin{center}
\includegraphics[scale=0.50]{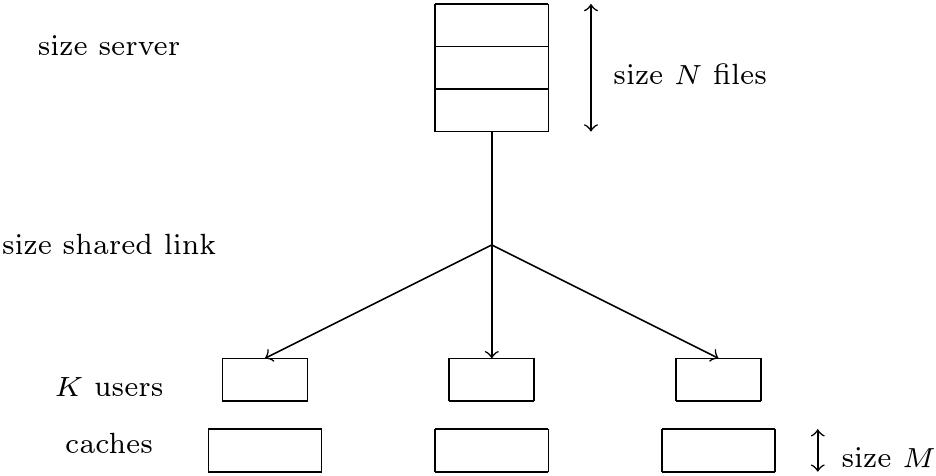}
\caption{Caching system setting.}
\label{fig0}
\end{center}
\end{figure}
As was discussed in \cite{AN} and \cite{YAN0}, the caching system operates in two separated phases:
\begin{enumerate}
\item[(1)] {\bf Placement Phase} In this phase, a file is subdivided into $F$ packets of equal size $1/F$ units. Then each packet is placed deterministically into each of the different users' caches. The total space taken up by the packets at each user cannot exceed its cache size $M$ (so that the number of packets is at most $\lfloor MF\rfloor$).
\item[(2)] {\bf Delivery Phase} In this phase each user may independently request a file from $\mathcal{W}$ at random. The request is denoted by ${\bf d} = (d_{1},...,d_{K})$, where user $k$ requests the file $W_{d_{k}}$, with $k\in \mathcal{K}$ and $d_{k}\in\{1,...,N\}$. On receiving the request ${\bf d}$, the server broadcasts an XOR multiplexing of packets. Each user is able to recover its requested file from the signal received in the delivery phase together with the content of its own cache.
\end{enumerate} We will often refer to $F$ as the {\em subpacketization level}.
\subsection{Placement Delivery Arrays} Again let $\mathcal{W}=\{W_{1},...,W_{N}\}$ be the set of files, $\mathcal{K}=\{1,...,K\}$ denote the $K$ users, and $\mathcal{F}=\{1,...,F\}$ be a set of cardinality $F$. 
Yan et al., in \cite{YAN0}, proposed the PDA as a method for designing a $(K,M,N)$ coded caching scheme that can be expressed in a single array. A {\em PDA} is an array $\mathcal{P}=\left[p_{j,k}\right]_{F\times K}$ of size $个F\times K$, where both $F$ and $FM/N$ are integers. The entries of the array are either the special symbol `` * '' or come from the set of integers $\mathcal{S}=\{1,...,S\}$, and each integer $s\in \mathcal{S}$ must appear at least once in the array. The following constraints must also be satisfied:
\begin{itemize}[leftmargin=*]
\item[(A)] The symbol `` * '' must appear $Z=FM/N$ times per column. (Therefore each column has $F-Z$ integer entries.)
\item[(B)] No integer can appear more than once per row or per column.
\item[(C)] For any two distinct entries $p_{j_{1},k_{1}}=p_{j_{2},k_{2}}=s\in \mathcal{S}$ where $j_{1}\neq j_{2}$ and $k_{1}\neq k_{2}$, we have that $p_{j_{1},k_{2}}=p_{j_{2},k_{1}}=*$.
\end{itemize}
Such an array satisfying the above constraints is referred to as a $(K,F,Z,S)$-{\em PDA}. 
Given a $(K,F,Z,S)$-PDA, a caching scheme may be implemented as follows.
\begin{itemize}
\item[(1)] In the placement phase a file is sub-divided into $F$ packets of equal size $\frac{1}{F}$, i.e., $W_{i}=\{W_{i,j}\mid j\in\mathcal{F}\}$. Each user has access to the set of files $\mathcal{W}$. The user $k\in \mathcal{K}$ receives the following packets in their cache: $Z_{k}=\{W_{i,j}\mid p_{j,k}=*, i=1,...,N\}$.
\item[(2)] In the delivery phase each user independently requests one file from $\mathcal{W}$. The request can be denoted by ${\bf d} = (d_{1},...,d_{K})$, where user $k$ requests the file $W_{d_{k}}$, with $k\in \mathcal{K}$ and $d_{k}\in\{1,...,N\}$. On receiving the request ${\bf d}$, the server broadcasts the XOR multiplexing of packets $\bigoplus_{\substack{p_{j,k}=s,j\in\mathcal{F}\\k\in\mathcal{K}}}W_{d_{k},j}$ at each time slot $s\in \mathcal{S}$, thereby allowing each user to recover its requested file.
\end{itemize} The decoding algorithm per user is as follows. On requesting file $W_{d_{k}}\in \mathcal{W}$, user $k\in \mathcal{K}$ already has $\{W_{d_{k},j}\mid p_{j,k}=*\}$ in their cache. In order to recover $W_{d_{k}}$, the packets $\{W_{d_{k},j}\mid p_{j,k}\in\mathcal{S}\}$ must be decoded. By constraint (C), for each $s\in\mathcal{S}$, user $k$ also has the packets $\{W_{d_{k'},j}\mid p_{j,k}=s,k'\neq k\}$ in their cache at the placement phase. Therefore, the unknown values $W_{d_{k},j}$ for $p_{j,k}=s$ are then computed as the multiplexing difference \[\bigoplus_{\substack{p_{j,k}=s,j\in\mathcal{F}\\k\in\mathcal{K}}}W_{d_{k},j}-\bigoplus_{\substack{p_{j,k'}=s,j\in\mathcal{F}\\k'\in\mathcal{K},k'\neq k}}W_{d_{k'},j}.\]
\begin{example}\label{ex01} Here we present a $(4,4,2,4)$-PDA for a $(4,1,2)$ centralized coded caching scheme. Assume we are given two files. Divide each file into $F=4$ packets so that $W_{1}=\{W_{1,1},W_{1,2},W_{1,3},W_{1,4}\}$ and $W_{2}=\{W_{2,1},W_{2,2},W_{2,3},W_{2,4}\}$. Let $Z_{1},...,Z_{4}$ be the caches for the four users. Populate the users' caches as follows:\[
Z_{1}=Z_{3}=\{W_{1,1},W_{1,3},W_{2,1},W_{2,3}\}\text{ \ and \ }Z_{2}=Z_{4}=\{W_{1,2},W_{1,4},W_{2,2},W_{2,4}\}.\]It is not difficult to see that the following array is a $(4,4,2,4)$-PDA. \begin{equation}\label{eq01}
\mathcal{P}=\left[\begin{tabular}{cccc}
*&1&*&3\\
1&*&3&*\\
*&2&*&4\\
2&*&4&*\\\end{tabular}\right]
\end{equation}We display the contents that are broadcast for some of the 16 possibilities for ${\bf d}$ in Table I.
\begin{center}
\tabcolsep=0.7cm
\small
\begin{tabular}{c|cccc}
\hline
  \hline
Request&Time Slot 1&Time Slot 2&Time Slot 3&Time Slot 4\\\hline
$(1,1,1,1)$&$W_{1,2}\oplus W_{1,1}$&$W_{1,4}\oplus W_{1,3}$&$W_{1,2}\oplus W_{1,1}$&$W_{1,4}\oplus W_{1,3}$\\
$(1,1,1,2)$&$W_{1,2}\oplus W_{1,1}$&$W_{1,4}\oplus W_{1,3}$&$W_{1,2}\oplus W_{2,1}$&$W_{1,4}\oplus W_{2,3}$\\
$(1,1,2,1)$&$W_{2,2}\oplus W_{1,1}$&$W_{2,4}\oplus W_{1,3}$&$W_{1,2}\oplus W_{1,1}$&$W_{1,4}\oplus W_{1,3}$\\
$\vdots$&$\vdots$&$\vdots$&$\vdots$&$\vdots$\\
$(1,1,2,2)$&$W_{1,2}\oplus W_{1,1}$&$W_{1,4}\oplus W_{1,3}$&$W_{2,2}\oplus W_{2,1}$&$W_{2,4}\oplus W_{2,3}$\\
$(1,2,2,1)$&$W_{1,2}\oplus W_{2,1}$&$W_{1,4}\oplus W_{2,3}$&$W_{2,2}\oplus W_{1,1}$&$W_{2,4}\oplus W_{1,3}$\\
\end{tabular}
\captionof{table}{{\footnotesize Contents broadcast for some of the 16 possibilites ${\bf d}$.}}
\end{center} The decoding algorithm works as follows. Suppose that ${\bf d} = (1,2,2,1)$. To recover $W_{1}$ the first user can decode $W_{1,2}$ and $W_{1,4}$. The packets $W_{1,2}$ and $W_{1,4}$ can be computed by subtracting $W_{2,1}$ from $W_{1,2}\oplus W_{2,1}$, and $W_{2,3}$ from $W_{1,4}\oplus W_{2,3}$, respectively. To recover $W_{2}$ the second user can decode $W_{2,1}$ and $W_{2,3}$. The packets $W_{2,1}$ and $W_{2,3}$ can be computed by subtracting $W_{1,2}$ from $W_{1,2}\oplus W_{2,1}$, and $W_{1,4}$ from $W_{1,4}\oplus W_{2,3}$, respectively. The other two users' requests are decoded in a similar way.
\end{example}
For some good examples of PDAs where the decoding process is shown in detail, the reader is referred to \cite{CHONG}, \cite{YAN0} and \cite{YAN1}.
\subsection{Placement Delivery Arrays from Strong Edge Colorings}\label{sec2.1}
We will assume some familiarity with basic concepts of graph theory. For a graph $\Gamma=(V,E)$, a (proper) edge coloring of $\Gamma$ is an assignment of colors to the set of edges $E$ such that no two adjacent edges have the same color. The minimal number of colors needed in an edge coloring of a graph $\Gamma$ is referred to as the chromatic index of $\Gamma$, and is denoted by $\chi'(\Gamma)$. An edge coloring of $\Gamma$ such that no two edges of the same color are adjacent to a same edge is referred to a {\it strong edge coloring}. The smallest number of colors needed in a strong edge coloring is called the {\it strong chromatic index} of $\Gamma$, and is denoted by $sq(\Gamma)$. A (proper) vertex coloring of $\Gamma$ is an assignment of colors to the vertices of $\Gamma$ in such a way that no two adjacent vertices have the same color. The minimum number of colors needed in a vertex coloring of $\Gamma$ is called the {\it chromatic number}, and is denoted by $\chi(\Gamma)$. For example, the complete graph $K_{n}$ on $n$ vertices has strong chromatic index $sq(K_{n})=\binom{n}{2}$ and chromatic number $\chi(K_{n})=n$, and the complete bipartite graph $K_{n,m}$ with one vertex component containing $n$ vertices and the other containing $m$ vertices, has strong chromatic index $sq(K_{n,m})=nm$ and chromatic number $\chi(K_{n,m})=2$.
\par
An edge coloring $\mathcal{E}=\mathcal{E}(\Gamma)$ on $\Gamma=(V,E)$ with colors from the set $\mathcal{S}$ is a system of ordered pairs $\{(\{u,v\},s)\in E\times\mathcal{S}\mid \{u,v\}\in E\text{ and }\{u,v\}\text{ has color }s\}$. For the remainder of the paper, when a set $\mathcal{S}$ has been designated as the set of colors of an edge coloring $\mathcal{E}$, it is always assumed to be proper in the sense that each member of $\mathcal{S}$ corresponds to at least one member of $E$.
\par
This paper will be concerned mostly with bipartite graphs, which are those graphs $\Gamma=(V,E)$ such that $V=V_{1}\cup V_{2}$ where $V_{1}\cap V_{2}=\emptyset$, and $E\subseteq V_{1}\times V_{2}$. For convenience, we will simply write such a graph $\Gamma$ as $(V_{1},V_{2},E)$ to indicate that it is bipartite. Thus, an edge coloring $\mathcal{E}=\mathcal{E}(\Gamma)$ on $\Gamma=(V_{1},V_{2},E)$, with colors from the set $\mathcal{S}$ can be treated as the triple system $\{(v_{1},v_{2},s)\in V_{1}\times V_{2}\times\mathcal{S}\mid (v_{1},v_{2})\in E\text{ and }(v_{1},v_{2})\text{ has color }s\}$.
\par
It is easy to see that an edge-colored bipartite graph $\Gamma=(\mathcal{F},\mathcal{K},E)$ with colors from the set $\mathcal{S}$ can be viewed as an $F\times K$ array ${\bf A}=\left[a_{j,k}\right]$ where $a_{j,k}=*$ if $(j,k)\not\in E$ and $a_{j,k}=s\in\mathcal{S}$. Then the corresponding triple system would be given by \[\mathcal{E}(\Gamma)=\{(j,k,s)\in \mathcal{F}\times\mathcal{K}\times\mathcal{S}\mid (j,k)\in E \text{ and } (j,k) \text{ has color }s\}.\] The following was shown in \cite{YAN1} (and indirectly in \cite{CHONG}, in terms of $3$-uniform hypergraphs).
\begin{lemma}\label{le1} {\rm \cite{YAN1}} For any $F\times K$ array ${\bf A}=\left[a_{j,k}\right]$ composed of a symbol `` * '' and integers $1,...,S$, ${\bf A}$ is a PDA if and only if its corresponding edge-colored bipartite graph $\Gamma=(\mathcal{F},\mathcal{K},E)$ satisfies:
\begin{enumerate}
\item[(i)] the vertices in $\mathcal{K}$ have constant degree, and
\item[(ii)] the corresponding edge coloring $\mathcal{E}(\Gamma)$ is a strong edge coloring.
\end{enumerate}
\end{lemma}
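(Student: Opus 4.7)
The plan is to translate each of the three defining conditions (A), (B), (C) of a PDA, one at a time, into a corresponding property of the bipartite graph $\Gamma = (\mathcal{F}, \mathcal{K}, E)$ under the natural bijection in which an entry $a_{j,k} = *$ corresponds to $(j,k) \notin E$ and an integer entry $a_{j,k} = s \in \mathcal{S}$ corresponds to an edge $(j,k) \in E$ carrying color $s$. Lemma \ref{le1} then falls out as three separate equivalences.

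First, I would observe that axiom (A), which states that each column contains exactly $Z$ stars and therefore $F - Z$ integer entries, is equivalent under this bijection to saying that every vertex $k \in \mathcal{K}$ has degree exactly $F - Z$ in $\Gamma$. This is precisely condition (i). Next, I would handle axiom (B): no integer appears more than once per row or per column. Two equal integer entries in the same row (respectively column) correspond to two edges sharing a vertex in $\mathcal{F}$ (respectively $\mathcal{K}$) that have been assigned the same color. Hence (B) is exactly the assertion that $\mathcal{E}(\Gamma)$ is a proper edge coloring, which is the first requirement built into condition (ii).

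The remaining work is to show that, given (B), axiom (C) is equivalent to $\mathcal{E}(\Gamma)$ being a strong edge coloring. Let $e_1 = (j_1,k_1)$ and $e_2 = (j_2,k_2)$ be two distinct edges sharing color $s$; properness, together with bipartiteness, forces $j_1 \neq j_2$ and $k_1 \neq k_2$. The key observation is that, because $\Gamma$ is bipartite, any edge adjacent to both $e_1$ and $e_2$ must have one endpoint in $\{j_1,j_2\} \subseteq \mathcal{F}$ and the other in $\{k_1,k_2\} \subseteq \mathcal{K}$, and it cannot be $e_1$ or $e_2$ themselves; the only candidates are therefore $(j_1,k_2)$ and $(j_2,k_1)$. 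Hence $\mathcal{E}(\Gamma)$ is strong if and only if neither of these pairs is an edge, which is exactly the conclusion of (C), namely $p_{j_1,k_2} = p_{j_2,k_1} = *$. Combining the three equivalences gives the lemma.

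The argument is essentially a dictionary once the bijection is set up, so I do not expect a real obstacle. The only step that deserves a moment of care is the bipartite case analysis narrowing the potential common-neighbor edges of $e_1$ and $e_2$ down to the two candidates named in (C); in a general (non-bipartite) graph the strong condition would have additional content, and it is exactly bipartiteness that causes the strong edge coloring requirement to collapse cleanly onto axiom (C).
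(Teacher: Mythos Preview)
The paper does not actually prove this lemma; it is quoted from \cite{YAN1} and used as a black box throughout. Your argument is correct and is the standard dictionary proof one would expect: (A) $\leftrightarrow$ constant degree on $\mathcal{K}$, (B) $\leftrightarrow$ proper edge coloring, and, given (B), (C) $\leftrightarrow$ the strong (induced-matching) condition, with the bipartite case analysis correctly pinning down $(j_1,k_2)$ and $(j_2,k_1)$ as the only possible edges adjacent to both same-colored edges.
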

\begin{example} Take the $(4,4,2,4)$-PDA $\mathcal{P}$ constructed in Example \ref{ex01}. Let $\mathcal{F}=\{a,b,c,d\}$ correspond to the rows of (\ref{eq01}), $\mathcal{K}=\{1',2',3',4'\}$ correspond to the columns, and $\mathcal{S}=\{1,2,3,4\}$ be the set of integers appearing in the array. We may let the members $1,2,3,4$ of $\mathcal{S}$ correspond to the colors red, blue, green and black, respectively. It is easy to see that this PDA has corresponding strong edge-colored bipartite graph given in Fig. \ref{fig1}.
\begin{figure}[!htb]
\begin{center}
\includegraphics[scale=0.5]{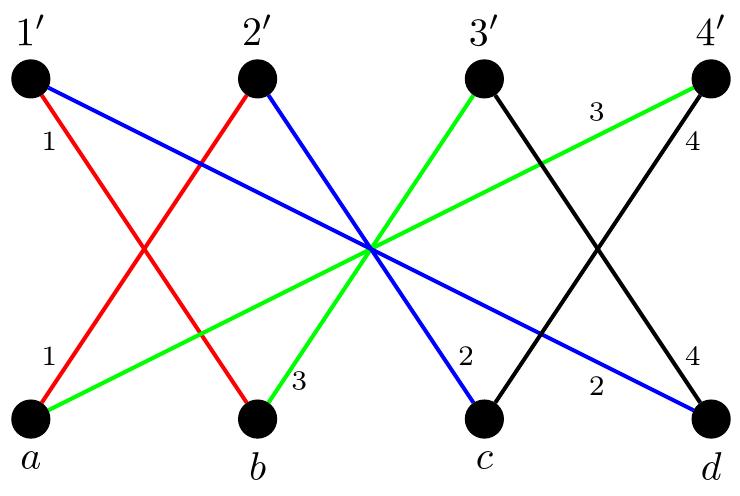}
\caption{Edge-colored bipartite graph corresponding to $\mathcal{P}$}
\label{fig1}
\end{center}
\end{figure}
\end{example}
\par We note here that to illustrate our constructions we will often use examples involving subset graphs, such as those discussed in Construction I of \cite{CHONG}, due to the flexibility of their parameters. We also remark that the examples we give to illustrate our constructions are in no way an exhaustive characterization of their uses. There are many known PDAs for which it may be worthwhile checking whether new PDAs with good parameters can be obtained after combining them with others via the methods discussed in this paper.
\section{A construction via combining two PDAs having the same set of integer entries}\label{sec3}
In this section we discuss a construction method that combines two PDAs that have the same set of integer entries to obtain a new PDA. This is equivalent (via Lemma \ref{le1}) to the construction of a strong edge coloring of a bipartite graph via combining two strong edge colorings of bipartite graphs that have the same set of colors.\par
The following theorem is the main the result of this section.
\begin{theorem}\label{co3.0} Suppose there exist PDAs $\mathcal{P}_{1}=\left[p_{i,j}^{1}\right]$ and $\mathcal{P}_{2}=\left[p_{i,j}^{2}\right]$ with parameters $(K_{1},F_{1},Z_{1},S')$ and $(K_{2},F_{2},Z_{2},S')$ respectively. Let $\mathcal{F}_{1}=\{1,...,F_{1}\},\mathcal{F}_{2}=\{1,...,F_{2}\},\mathcal{K}_{1}=\{1,...,K_{1}\}$ and $\mathcal{K}_{2}=\{1,...,K_{2}\}$. Then there exists a PDA $\mathcal{P}$ with parameters $(K,F,Z,S)$ where $K=|\{(i,i')\in \mathcal{F}_{1}\times\mathcal{F}_{2}\mid p_{i,j}^{1}=p_{i',j'}^{2}(\neq *)\text{ for some }(j,j')\in\mathcal{K}_{1}\times\mathcal{K}_{2}\}|,F=K_{1},Z=K_{1}-|\{j\in K_{1}\mid p_{i,j}^{1}=p_{i',j'}^{2}(\neq*)\text{ for some }j'\in K_{2}\}|,$ and $S=S'K_{2}$.
\end{theorem}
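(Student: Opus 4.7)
The plan is to construct the claimed PDA $\mathcal{P}$ explicitly from the data of $\mathcal{P}_1$ and $\mathcal{P}_2$ and then verify the axioms of Lemma \ref{le1}. I would index the $F = K_1$ rows of $\mathcal{P}$ by $\mathcal{K}_1$ and its columns by the set $B \subseteq \mathcal{F}_1 \times \mathcal{F}_2$ of compatible pairs $(i,i')$ described in the statement (so $K = |B|$). For each position $(j,(i,i'))$ I would set the entry to $\ast$ unless $p^1_{i,j} = s \in \mathcal{S}'$ and the color $s$ also appears in row $i'$ of $\mathcal{P}_2$; in the latter case axiom (B) for $\mathcal{P}_2$ guarantees a \emph{unique} $j' \in \mathcal{K}_2$ with $p^2_{i',j'} = s$, and I would take the entry to be the pair $(s,j') \in \mathcal{S}' \times \mathcal{K}_2$. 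The number of $\ast$'s in column $(i,i')$ then equals $K_1 - |\{j : p^1_{i,j} = p^2_{i',j'} \ne \ast \text{ for some } j'\}|$, matching the prescribed $Z$, and the alphabet of entries sits inside $\mathcal{S}' \times \mathcal{K}_2$ of size at most $S' K_2$.

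Axiom (B) for $\mathcal{P}$ is a quick check: coinciding non-star entries of the form $(s,j_0)$ in a common row or column of $\mathcal{P}$ immediately force agreement of the remaining index via column- or row-wise axiom (B) of $\mathcal{P}_1$ and $\mathcal{P}_2$. The substantive step is axiom (C). Assume $p_{j_1,(i_1,i_1')} = p_{j_2,(i_2,i_2')} = (s,j_0)$ with $j_1 \ne j_2$ and $(i_1,i_1') \ne (i_2,i_2')$. From $p^2_{i_1',j_0} = p^2_{i_2',j_0} = s$ and column-wise axiom (B) of $\mathcal{P}_2$ I obtain $i_1' = i_2'$, forcing $i_1 \ne i_2$. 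Then axiom (C) of $\mathcal{P}_1$ applied to $p^1_{i_1,j_1} = p^1_{i_2,j_2} = s$ yields $p^1_{i_1,j_2} = p^1_{i_2,j_1} = \ast$, and by the construction this propagates to $p_{j_1,(i_2,i_2')} = p_{j_2,(i_1,i_1')} = \ast$, as required.

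The main obstacle I anticipate is axiom (A). For $Z$ to be a single integer, the count $K_1 - |\{j : p^1_{i,j} = p^2_{i',j'} \ne \ast \text{ for some } j'\}|$ must be independent of the choice of $(i,i') \in B$, and the claim $S = S'K_2$ requires every color to appear in every column of $\mathcal{P}_2$ (equivalently $F_2 - Z_2 = S'$). These are genuine regularity conditions on the input PDAs rather than consequences of the PDA axioms, so I would read the theorem as producing a PDA with the stated parameters exactly when the input PDAs satisfy these uniformity conditions, and I would flag this assumption explicitly at the parameter-counting step.
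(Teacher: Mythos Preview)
Your construction is exactly the paper's: the paper packages the same array as a strong edge coloring in Lemma~\ref{th3.0} (with $V=W_1=\mathcal{K}_1$, $W\subseteq V_1\times V_2=\mathcal{F}_1\times\mathcal{F}_2$, and color $(s,v)=(s,j')$), and then invokes Lemma~\ref{le1}; your direct array verification of axioms (B) and (C) is precisely the edge-coloring and strong-edge-coloring arguments in the proof of Lemma~\ref{th3.0}, only rewritten in PDA coordinates. So the approach is the same, just without the detour through graph language.

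Your caveat about axiom (A) and about $S=S'K_2$ is well taken and is not a defect of your argument relative to the paper's: Lemma~\ref{th3.0} only establishes the strong-edge-coloring part (condition (ii) of Lemma~\ref{le1}) and says nothing about constant degree on the $W$-side, and the paper's one-line proof of Theorem~\ref{co3.0} does not address this either. Indeed, in Example~\ref{ex3.0} the paper explicitly passes to a subgraph $\Gamma^{*}$ ``for the purpose of making it easier to count the valency of the vertices'', which is exactly the regularity issue you flag. So your reading---that the stated parameters hold under the additional uniformity hypotheses you isolate---matches how the paper actually uses the result.
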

Here we give a simple illustrative example.
\begin{example} Let $\mathcal{P}_{1}$ and $\mathcal{P}_{2}$ both be $(4,2,1,2)$ PDAs so that \[\mathcal{P}_{1}=\left[\begin{tabular}{cccc}
*&1&*&2\\
1&*&2&*\\\end{tabular}\right]_{F_{1}\times K_{1}} \ \text{ and } \ \mathcal{P}_{2}=\left[\begin{tabular}{cccc}
*&1&*&2\\
1&*&2&*\\\end{tabular}\right]_{F_{2}\times K_{2}}.\] Let $\mathcal{K}_{1}=\mathcal{K}_{2}=\{1',2',3',4'\},\mathcal{F}_{1}=\mathcal{F}_{2}=\{1,2\}$ and $\mathcal{S}_{1}=\mathcal{S}_{2}=\{1,2\}$. Let $\Gamma_{1}=(\mathcal{F}_{1},\mathcal{K}_{1},E_{1})$ resp. $\Gamma_{2}=(\mathcal{F}_{2},\mathcal{K}_{2},E_{2})$ be the underlying bipartite graphs of $\mathcal{P}_{1}$ resp. $\mathcal{P}_{2}$, and let $\mathcal{E}_{1}$ resp. $\mathcal{E}_{1}$ be the corresponding strong edge colorings on $\Gamma_{1}$ resp. $\Gamma_{2}$. Then it is clear that $\mathcal{E}_{1}=\mathcal{E}_{2}=\{(1,2',1),(1,4',2),(2,1',1),(2,3',2)\}$, and $\Gamma_{1}$ and $\Gamma_{2}$ (together with their edge colorings) both have the form given in Fig. \ref{fig2}.
\begin{figure}[t]
\begin{minipage}[c]{0.45\textwidth}
\begin{center}
\includegraphics[scale=0.35]{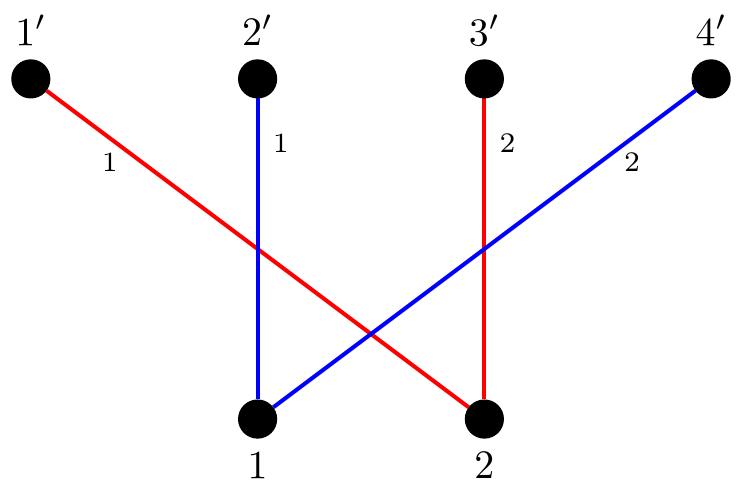}
\caption{Edge-colored $\Gamma_{1}$ (and $\Gamma_{2}$)}
\label{fig2}
\end{center}
\end{minipage}%
\hspace{0.05\textwidth}%
\begin{minipage}[c]{0.45\textwidth}
\begin{center}
\includegraphics[scale=0.35]{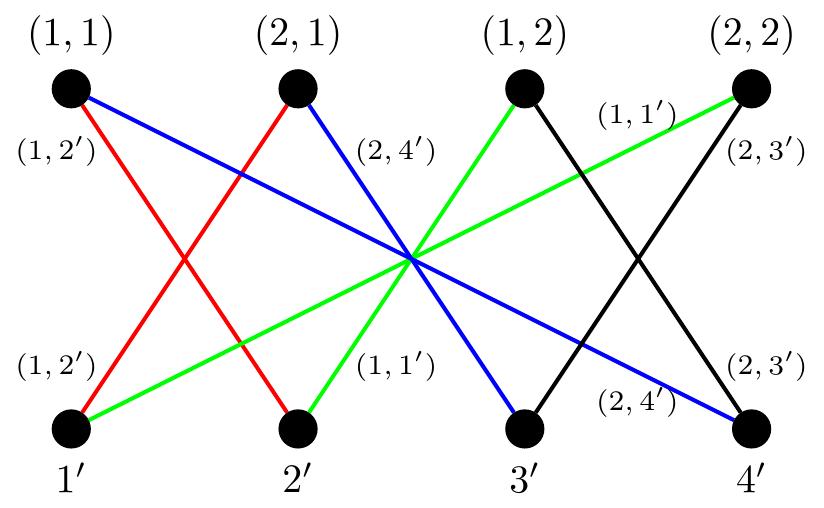}
\caption{Edge-colored $\Gamma$}
\label{fig3}
\end{center}
\end{minipage}
\end{figure}
 Notice that the set \[\begin{split}\mathcal{E}
 &=\{(y,(x,u),(s,v))\mid(x,y,s)\in \mathcal{E}_{1}\text{ and }(u,v,s)\in\mathcal{E}_{2}\}\\
 &=\{(2',(1,1),(1,2')),(2',(1,2),(1,1')),(1',(2,1),(1,2')),
 (1',(2,2),(1,1')),\\
 &\quad\quad(4',(1,1),(2,4')),(4',(1,2),(2,3')),(3',(2,1),(2,4')),(3',(2,2),(2,3'))\}\end{split}\] is a strong edge coloring on $\Gamma=(V,W,E)$, with colors from $\Sigma=\{(1,1'),(1,2'),(2,3'),(2,4')\}$, where
$V = \{1',2',3',4'\},W = \{(1,1),(1,2),(2,1),(2,2)\}$, and 
 \[E = \{(2',(1,1)),(2',(1,2)),(1',(2,1)),
 (1',(2,2)),(4',(1,1)),(4',(1,2)),(3',(2,1)),(3',(2,2))\}.\] The graph $\Gamma$ (together with its edge coloring) has the form given in Fig. \ref{fig3}.
Taking $\mathcal{K}=V,\mathcal{F}=W$ and $S=\Sigma$, with some relabelling, we obtain the same $(4,4,2,4)$ PDA $\mathcal{P}$ as was discussed in Example \ref{ex01}.
\end{example}
To prove Theorem \ref{co3.0} we will need the following lemma.
\begin{lemma}\label{th3.0} Let $\Gamma_{1}=(V_{1},W_{1},E_{1})$ and $\Gamma_{2}=(V_{2},W_{2},E_{2})$ be two bipartite graphs, and let $\mathcal{E}_{1}$ and $\mathcal{E}_{2}$ be strong edge colorings on $\Gamma_{1}$ and $\Gamma_{2}$, respectively, both having colors from $\mathcal{S}$. Define \begin{eqnarray*}
V & = & W_{1}\\
W & = & \{(x,u)\in V_{1}\times V_{2}\mid(x,y,s)\in\mathcal{E}_{1}\text{ and }(u,v,s)\in\mathcal{E}_{2}\text{ for some }y\in W_{1},v\in W_{2},s\in\mathcal{S}\},\\
E & = & \{(y,(x,u))\in V\times W\mid(x,y,s)\in \mathcal{E}_{1}\text{ and }(u,v,s)\in\mathcal{E}_{2}\text{ for some }s\in\mathcal{S},v\in W_{2}\}.\end{eqnarray*} Then \[
\mathcal{E}=\{(y,(x,u),(s,v))\mid(x,y,s)\in \mathcal{E}_{1}\text{ and }(u,v,s)\in\mathcal{E}_{2}\}\] is a strong edge coloring on the bipartite graph $\Gamma=(V,W,E)$ with colors from $\Sigma=\{(s,v)\in \mathcal{S}\times W_{2}\mid(u,v,s)\in\mathcal{E}_{2}\text{ for some }u\in V_{2}\}$.
\end{lemma}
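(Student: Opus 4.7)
The plan is to verify that $\mathcal{E}$ satisfies the definition of a strong edge coloring in three steps: (a) the color assignment on $E$ is single-valued; (b) $\mathcal{E}$ is a proper edge coloring of $\Gamma$; and (c) no third edge is adjacent to two distinct edges of the same color, i.e., each color class is an induced matching. For (a), given $(y,(x,u))\in E$, there is exactly one $s$ with $(x,y,s)\in\mathcal{E}_1$, and then exactly one $v$ with $(u,v,s)\in\mathcal{E}_2$ by properness of $\mathcal{E}_2$ at $u$; existence of such a $v$ is guaranteed by the defining condition of $E$. Hence every edge of $\Gamma$ receives exactly one pair $(s,v)\in\Sigma$.

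For (b), suppose two edges of $\Gamma$ share a vertex and carry the same color $(s,v)$. If they share the $V$-vertex $y$, say $(y,(x_1,u_1))$ and $(y,(x_2,u_2))$, then $(x_1,y,s),(x_2,y,s)\in\mathcal{E}_1$ forces $x_1=x_2$ by properness of $\mathcal{E}_1$ at $y$, and $(u_1,v,s),(u_2,v,s)\in\mathcal{E}_2$ forces $u_1=u_2$ by properness of $\mathcal{E}_2$ at $v$. If instead they share the $W$-vertex $(x,u)$, properness of $\mathcal{E}_1$ at $x$ equates their $V$-vertices. Either way the two edges coincide.

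For (c), assume $e_1=(y_1,(x_1,u_1))$ and $e_2=(y_2,(x_2,u_2))$ are distinct edges of $\Gamma$ with common color $(s,v)$ and that $e_3=(y_3,(x_3,u_3))$ is an edge adjacent to both. By (b), $e_1$ and $e_2$ share no vertex, so $y_1\neq y_2$ and $(x_1,u_1)\neq (x_2,u_2)$; the bipartite structure then forces, up to relabelling $e_1\leftrightarrow e_2$, the configuration $y_3=y_1$ and $(x_3,u_3)=(x_2,u_2)$. So $(x_2,y_1)$ is an edge of $\Gamma_1$. Now $(x_1,y_1)$ and $(x_2,y_2)$ are two distinct edges of $\Gamma_1$ (since $y_1\neq y_2$), both colored $s$, while $(x_2,y_1)$ is adjacent to each of them (via $y_1$ and $x_2$, respectively); the strong-edge-coloring property of $\mathcal{E}_1$ forces $(x_2,y_1)$ to coincide with one of the two, and the only consistent choice is $(x_2,y_1)=(x_1,y_1)$, giving $x_1=x_2$. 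On the $\Gamma_2$ side, the edges $(u_1,v)$ and $(u_2,v)$ both have color $s$ and share vertex $v$, so properness of $\mathcal{E}_2$ yields $u_1=u_2$. Combining these, $(x_1,u_1)=(x_2,u_2)$, contradicting the fact that $e_1$ and $e_2$ share no $W$-vertex. The main obstacle is organising the distance-two case analysis in the bipartite product; once the $V$- and $W$-adjacencies are separated, strong coloring of $\mathcal{E}_1$ collapses the $x$-coordinates while mere properness of $\mathcal{E}_2$ suffices to collapse the $u$-coordinates.
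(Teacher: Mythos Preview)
Your proof is correct and follows essentially the same route as the paper's: both establish properness by using the properness of $\mathcal{E}_1$ and $\mathcal{E}_2$ at the shared vertex, and both handle the strong condition by noting that any offending ``cross'' edge would project to an edge $(x_2,y_1)$ in $\Gamma_1$ adjacent to the two $s$-colored edges $(x_1,y_1),(x_2,y_2)$, contradicting the strong property of $\mathcal{E}_1$ (together with $u_1=u_2$ from properness of $\mathcal{E}_2$ at $v$). Your step~(a), checking that the color $(s,v)$ assigned to an edge is single-valued, is a useful explicit addition that the paper leaves implicit.
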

\begin{proof} That $\Gamma$ is bipartite is straight forward. We first show that $\mathcal{E}$ is an edge coloring of $\Gamma$. Suppose that $(y,(x,u),(s,v)),(y',(x',u'),(s,v))\in\mathcal{E}$. First note we must have $u=u'$ since $(u,v,s),(u',v,s)\in\mathcal{E}_{2}$ and $\mathcal{E}_{2}$ is an edge coloring. If $y=y'$ then $(x,y,s),(x',y,s)\in\mathcal{E}_{1}$. Then we must have $x=x'$ since $\mathcal{E}_{1}$ is an edge coloring. Also notice that, if $(x,u)=(x',u')$, then we must have $y=y'$ since $(x,y,s),(x,y',s)\in\mathcal{E}_{1}$ and $\mathcal{E}_{1}$ is an edge coloring. Thus $\mathcal{E}$ is an edge coloring.
\par
We now show $\mathcal{E}$ is a strong edge coloring. Suppose that $(y,(x,u),(s,v)),(y',(x',u'),(s,v))\in\mathcal{E}$. We show that there are no members $(y,(x',u'),(s',v'))$ or $(y',(x,u),(s',v'))$ in $\mathcal{E}$ for any $(s',v')\in \Sigma$. Note again that we must have $u=u'$ since 
$(u,v,s),(u',v,s)\in \mathcal{E}_{2}$ and $\mathcal{E}_{2}$ is an edge coloring. Also note that we must have $x\neq x'$ since, if $x=x'$, then $(x,y,s),(x,y',s)\in\mathcal{E}_{1}$ contradicting that $\mathcal{E}_{1}$ is an edge coloring.
\par
Now suppose that $(y,(x',u'),(s',v'))\in \mathcal{E}$ for some $(s',v')\in\Sigma$. By definition of $\mathcal{E}$ we have that $(x',y,s')\in\mathcal{E}_{1}$. If $s=s'$ then we have $(x',y,s),(x,y,s)\in\mathcal{E}_{1}$. Since $x'\neq x$, this contradicts that $\mathcal{E}_{1}$ is an edge coloring. If $s'\neq s$ then we have $(x,y,s),(x',y',s),(x',y,s')\in\mathcal{E}_{1}$ which contradicts that $\mathcal{E}_{1}$ is a strong edge coloring. Thus there is no member $(y,(x',u'),(s',v'))\in \mathcal{E}$ for any $(s',v')\in\Sigma$.
\par
Now suppose that $(y',(x,u),(s',v'))\in \mathcal{E}$ for some $(s',v')\in\Sigma$. By definition of $\mathcal{E}$ we have that $(x,y',s')\in\mathcal{E}_{1}$. If $s=s'$ then we have $(x,y,s),(x,y',s)\in\mathcal{E}_{1}$ which contradicts that $\mathcal{E}_{1}$ is an edge coloring. If $s'\neq s$ then we have $(x,y,s),(x',y',s),(x,y',s')\in\mathcal{E}_{1}$ which contradicts that $\mathcal{E}_{1}$ is a strong edge coloring. Thus there is no member $(y',(x,u),(s',v'))\in \mathcal{E}$ for any $(s',v')\in\Sigma$. This completes the proof.
\end{proof}\noindent
{\it Proof of Theorem \ref{co3.0}}. This is an immediate consequence of Lemma \ref{th3.0} and Lemma \ref{le1}. \qed\par
Here we give an example of how Lemma \ref{th3.0} can be used to construct a family of PDAs.
\begin{example}\label{ex3.0} Let $a,b,n$ and $t$ be positive integers such that $a+b\leq n$ and $0\leq t< b$. Let $\left[n\right]=\{1,...,n\}$ and let $\binom{\left[n\right]}{a}$ denote the collection of subsets of $\left[n\right]$ of size $a$. Define $\Gamma_{1}=(V_{1},W_{1},E_{1})$ where $V_{1}=\binom{\left[n\right]}{a+t},W_{1}=\binom{\left[n\right]}{b-t}$ and $E_{1}=\{(A,B)\in V_{1}\times W_{1}\mid A\cap B=\emptyset\}$, and $\Gamma_{2}=(V_{2},W_{2},E_{2})$ where $V_{2}=\binom{\left[n\right]}{a},W_{2}=\binom{\left[n\right]}{b}$ and $E_{2}=\{(A,B)\in V_{2}\times W_{2}\mid A\cap B=\emptyset\}$. Then taking $\mathcal{C}=\binom{\left[n\right]}{a+b}$, we have that $\mathcal{E}_{1}=\{(A,B,C)\in V_{1}\times W_{1}\times\mathcal{C}\mid A\cup B=C\}$ and $\mathcal{E}_{2}=\{(A,B,C)\in V_{2}\times W_{2}\times\mathcal{C}\mid A\cup B=C\}$ are strong edge colorings on $\Gamma_{1}$ and $\Gamma_{2}$ respectively both having colors from $\mathcal{C}$. Let $V,W,E$ and $\Sigma$ be defined as in Lemma \ref{th3.0}, set $\Gamma=(V,W,E)$, and let $\mathcal{E}$ be the corresponding edge coloring with colors from $\Sigma$.
\par
For the purpose of making it easier to count the valency of the vertices, we construct a PDA from a subgraph of $\Gamma$ in the following way. Take $W^{*}=\{(A,A')\in W\mid A'\subseteq A\}$, and let $\Gamma^{*}=(V,W^{*},E^{*})$ be the graph obtained by removing any edges of $\Gamma$ which contain vertices in $W\setminus W^{*}$. Thus, if $(A,A')\in W^{*}$, then there are $\binom{n-a-t}{b-t}$ members of $\mathcal{C}$ containing $A'$ so that the vertices in $W^{*}$ have constant degree. If we let $\mathcal{E}^{*}$ be the subset of $\mathcal{E}$ obtained by removing any edges which are not in $\Gamma$, then we have that $\mathcal{E}^{*}$ is a strong edge coloring on $\Gamma^{*}$. Then, taking $\mathcal{F}=V,\mathcal{K}=W^{*}$ and $\mathcal{S}=\Sigma$, the PDA will have the following parameters.
\begin{center}
\tabcolsep=1.0cm
\small
\begin{tabular}{c|c|c|c}
\hline
  \hline
$K$&$1-M/N$&$F$&$R$\\\hline
$\binom{n}{a+t}\binom{a+t}{a}$&$\binom{n-a-t}{b-t}/\binom{n}{b-t}$&$\binom{n}{b-t}$&$\binom{n}{a+b}\binom{a+b}{b}/\binom{n}{b-t}$\\\hline
\end{tabular}
\end{center}
\par
If we take $a=1$ and $t=1$ then we obtain a $(2\binom{n}{2},\binom{n}{b-1},\binom{n}{b-1}-\binom{n-2}{b-1},\binom{n}{b+2})$-PDA. If we take $n=\lambda b$ for some constant $\lambda>1$, then we get $R=S/F=(b+1)\binom{\lambda b}{b+1}/\binom{\lambda b}{b-1}=b(\lambda-1)^{2}+(\lambda-1)$ and $1-M/N=1-Z/F=\binom{\lambda b-2}{b-1}/\binom{\lambda b}{b-1}\approx ((\lambda-1)/\lambda)^{2}$ (for $b$ large). By Stirling's formula we have that (for $n$ large) $K=2\binom{n}{2}\approx n^{2}$, and \[
F=\binom{n}{\lambda^{-1} n-1}\approx \frac{1+o(1)}{\sqrt{2\pi\lambda^{-1}(1-\lambda^{-1})n}}2^{nH(\lambda^{-1})}=\mathcal{O}(K^{-1/4}2^{\sqrt{K}H(\lambda^{-1})}), 
\] where \begin{equation}\label{eqa} H(x)=-x\log_{2}(x)-(1-x)\log_{2}(1-x)\end{equation} for $0<x<1$. Thus, $F$ still grows sub-exponentially with $K$, but at a slower rate than that of the initial (component) PDAs, whose growth rate can be found in Remark IV.3 of \cite{CHONG}
\end{example}
\begin{remark} It is obvious that the method for combining strong edge colorings put forth in this section can also be applied recursively to a family of three or more bipartite graphs, but characterizing such in closed form seems difficult as the adjacency relations quickly become complicated, and so this is left as an open problem.
\end{remark}
\section{A method for constructing PDAs based on tensor-like products of their underlying bipartite graphs}\label{sec4}
In this section we discuss a method for constructing PDAs that involves a tensor-like product of the PDAs' underlying bipartite graphs.\par
The following theorem is the main result of this section.
\begin{theorem}\label{th4.3} Let $l$ be a positive integer. Suppose there exist PDAs $\mathcal{P}_{i}$ with parameters $(K_{i},F_{i},Z_{i},S_{i})$, respectively, for $1\leq i\leq l$, where $Z_{i}=F_{i}-g_{i}$. Then there exists a PDA $\mathcal{P}$ with parameters $(K,F,Z,S)$ where $K=\Pi_{i=1}^{l}K_{i},F=\Pi_{i=1}^{l}F_{i},Z=\Pi_{i=1}^{l}F_{i}-\Pi_{i=1}^{l}g_{i}$ and $S=\Pi_{i=1}^{l}S_{i}$.
\end{theorem}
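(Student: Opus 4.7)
The plan is to invoke Lemma \ref{le1} once more, by building a strong edge coloring on a tensor-like product of the bipartite graphs $\Gamma_i = (\mathcal{F}_i, \mathcal{K}_i, E_i)$ that correspond to the PDAs $\mathcal{P}_i$ together with their strong edge colorings $\mathcal{E}_i$ from colors $\mathcal{S}_i$. Specifically, I would take $\Gamma = (\mathcal{F}, \mathcal{K}, E)$ with $\mathcal{F} = \prod_{i=1}^{l} \mathcal{F}_i$, $\mathcal{K} = \prod_{i=1}^{l} \mathcal{K}_i$, and declare $((j_1,\ldots,j_l), (k_1,\ldots,k_l)) \in E$ if and only if $(j_i, k_i) \in E_i$ for every $i$. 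Such an edge is then colored by $(s_1,\ldots,s_l) \in \prod_{i=1}^{l} \mathcal{S}_i$, where $s_i$ is the $\mathcal{E}_i$-color of $(j_i, k_i)$.

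The parameter bookkeeping is then immediate. Clearly $|\mathcal{F}| = \prod F_i$ and $|\mathcal{K}| = \prod K_i$, and every tuple in $\prod_{i=1}^{l} \mathcal{S}_i$ is realized as a color by assembling any one edge of each factor with the prescribed color, so $S = \prod S_i$. For $\kappa = (k_1,\ldots,k_l) \in \mathcal{K}$, the neighbors of $\kappa$ in $\mathcal{F}$ are exactly the tuples $(j_1,\ldots,j_l)$ with $(j_i, k_i) \in E_i$ for each $i$; since $k_i$ has exactly $g_i = F_i - Z_i$ neighbors in $\Gamma_i$, every $\mathcal{K}$-vertex has degree $\prod g_i$, yielding $Z = \prod F_i - \prod g_i$ stars per column and verifying condition (i) of Lemma \ref{le1}.

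The substantive step is to verify that the coloring $\mathcal{E}$ just defined is a strong edge coloring on $\Gamma$. Properness I would handle coordinate-wise: two edges of $\Gamma$ sharing an endpoint and carrying identical colors project in each coordinate to two edges of $\Gamma_i$ sharing an endpoint and bearing color $s_i$, so by properness of $\mathcal{E}_i$ they coincide in every coordinate, hence coincide in $\Gamma$. For the strong property, consider distinct edges $e = (\mathbf{j}, \mathbf{k})$ and $e' = (\mathbf{j}', \mathbf{k}')$ sharing color $(s_1,\ldots,s_l)$ with $\mathbf{j} \neq \mathbf{j}'$ and $\mathbf{k} \neq \mathbf{k}'$; I must rule out $(\mathbf{j}, \mathbf{k}')$ and $(\mathbf{j}', \mathbf{k})$ from $E$. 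The linchpin observation is that in each coordinate $i$, properness of $\mathcal{E}_i$ forces the two edges $(j_i,k_i)$ and $(j'_i,k'_i)$ (both of color $s_i$) to be either literally equal or else to satisfy both $j_i \neq j'_i$ and $k_i \neq k'_i$, since agreement in only one side would place two same-colored edges at a common vertex. Because $e \neq e'$, some coordinate $i_0$ falls into the latter case, and for this $i_0$ the strong edge coloring property of $\mathcal{E}_{i_0}$ excludes $(j_{i_0}, k'_{i_0})$ and $(j'_{i_0}, k_{i_0})$ from $E_{i_0}$, which in turn excludes the cross tuples from $E$. An application of Lemma \ref{le1} then closes the argument.

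The main obstacle is not really difficulty but discipline in the coordinate-wise bookkeeping: the whole proof hinges on that one trichotomy lemma saying properness of $\mathcal{E}_i$ forbids ``mixed'' coordinates (equal on one side, unequal on the other), so that the strong edge coloring condition need only be invoked in a single coordinate where the two edges genuinely disagree.
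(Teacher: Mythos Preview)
Your proposal is correct and follows essentially the same route as the paper: the paper derives Theorem~\ref{th4.3} from Lemma~\ref{le1} together with Lemma~\ref{th4.1}, and your argument amounts to reproving Lemma~\ref{th4.1} inline (the product graph you build is precisely $\Gamma_1\star\cdots\star\Gamma_l$, and your coordinate-wise verification of the strong edge coloring property is the natural proof of that lemma). The paper in turn notes that Lemma~\ref{th4.1} is easily deduced since $\Gamma_1\star\cdots\star\Gamma_l$ is a subgraph of the ordinary tensor product, for which the analogous statement is a corollary of \cite[Theorem~4]{TOGN}.
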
To prove Theorem \ref{th4.3} we will need some results concerning strong edge colorings of tensor and tensor-like products of bipartite graphs.\par
If $\Gamma_{i}=(V_{i},E_{i})$, $1\leq i\leq l$, is a family of graphs, the tensor product $\Gamma_{1}\times\cdots\times\Gamma_{l}$ is the graph with vertex set $V=\prod_{i=1}^{l}V_{i}$ and edge set $E=\{((x_{1},...,x_{l}),(y_{1},...,y_{l}))\in V\times V\mid \{x_{i},y_{i}\}\in E_{i}\text{ for each }i\}$. It is not difficult to see that a tensor product is bipartite if and only if at least one of its factors is bipartite. The following is an immediate corollary to the proof of Theorem 4 of \cite{TOGN} (where the authors show that $sq(\Gamma_{1}\times\Gamma_{2})\leq sq(\Gamma_{1})sq(\Gamma_{2})$).
\begin{lemma}\label{th4.2} Let $\Gamma_{i}=(V_{i},E_{i})$, $1\leq i\leq l$, be a family of graphs, at least one of which is bipartite. Also, for each $i,1\leq i\leq l$, let $\mathcal{E}_{i}$ be a strong edge coloring of $\Gamma_{i}$ with colors from $S_{i}$. Then $\Gamma=\Gamma_{1}\times\cdots\times\Gamma_{l}$ is bipartite, and \[\mathcal{E}=\{((x_{1},...,x_{l}),(y_{1},...,y_{l}),(s_{1},...,s_{l}))\mid(x_{i},y_{i},s_{i})\in\mathcal{E}_{i}\text{ for each }i\}\] is a strong edge coloring on $\Gamma$ with colors from $\prod_{i=1}^{l}\mathcal{S}_{i}$.
\end{lemma}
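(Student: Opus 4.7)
My plan is to verify three assertions in turn: (a) that $\Gamma$ is bipartite, (b) that $\mathcal{E}$ is a proper edge coloring of $\Gamma$, and (c) that in fact $\mathcal{E}$ is a strong edge coloring. For (a), I fix an index $i_{0}$ for which $\Gamma_{i_{0}}$ is bipartite with parts $A$ and $B$. Partitioning the vertex set of $\Gamma$ according to whether the $i_{0}$-th coordinate of a vertex lies in $A$ or in $B$ gives a proper $2$-coloring of $\Gamma$, because every tensor-product edge projects to an edge of $\Gamma_{i_{0}}$ whose endpoints lie in opposite parts.

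For (b), suppose two edges of $\Gamma$ share the color $(s_{1},\ldots,s_{l})$ and also share an endpoint $(x_{1},\ldots,x_{l})$. Projecting to each coordinate $i$ yields two edges of $\Gamma_{i}$ incident to $x_{i}$ and both carrying color $s_{i}$. Since $\mathcal{E}_{i}$ is a proper edge coloring, these projections must coincide; doing this for every $i$ forces the two edges of $\Gamma$ themselves to coincide.

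For (c), suppose distinct edges $e_{1},e_{2}$ of $\Gamma$ both carry color $(s_{1},\ldots,s_{l})$, and assume toward a contradiction that some edge $f$ of $\Gamma$ is adjacent to both. Because $e_{1}\neq e_{2}$, there is at least one coordinate $i_{0}$ in which their projections to $\Gamma_{i_{0}}$ are distinct edges; these two distinct edges of $\Gamma_{i_{0}}$ are then both colored $s_{i_{0}}$. By the definition of the tensor product, the projection of $f$ to coordinate $i_{0}$ is itself an edge of $\Gamma_{i_{0}}$ that shares an endpoint with each of the projections of $e_{1}$ and $e_{2}$. Either this projection equals one of the two (in which case that projection shares an endpoint with the other same-colored projection, violating the strong property of $\mathcal{E}_{i_{0}}$), or it is a third edge adjacent to both, which again violates the strong property of $\mathcal{E}_{i_{0}}$.

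The one point requiring care is the degenerate situation in (c) where the projection of $f$ into a given factor might collapse onto the projection of $e_{1}$ or $e_{2}$; this is precisely why I first isolate a coordinate $i_{0}$ in which $e_{1}$ and $e_{2}$ already project to distinct edges, after which the case split above closes the argument. With that coordinate fixed, the remainder is essentially the two-factor analysis carried out in the proof of Theorem~4 of \cite{TOGN} applied to $\mathcal{E}_{i_{0}}$, and the extension from two factors to $l$ factors is coordinatewise and routine.
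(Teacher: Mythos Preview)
Your argument in (c) has a genuine gap: the claim ``Because $e_{1}\neq e_{2}$, there is at least one coordinate $i_{0}$ in which their projections to $\Gamma_{i_{0}}$ are distinct edges'' is false. With $l=2$, if $\{a,b\}\in E_{1}$ carries color $s_{1}$ and $\{c,d\}\in E_{2}$ carries color $s_{2}$, then $e_{1}=\{(a,c),(b,d)\}$ and $e_{2}=\{(a,d),(b,c)\}$ are distinct edges of $\Gamma_{1}\times\Gamma_{2}$, both colored $(s_{1},s_{2})$ by $\mathcal{E}$, yet project to the same edge in each factor. More generally, swapping the $i$-th coordinates of the two endpoints for any nonempty proper subset of indices produces a distinct edge with identical projections and hence the same color, so your coordinate $i_{0}$ need not exist.

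The lemma is nevertheless true, and the missing case is short. Write $e_{1}=\{X,Y\}$ with $X=(x_{1},\ldots,x_{l})$, $Y=(y_{1},\ldots,y_{l})$, and suppose $e_{2}=\{X',Y'\}$ projects to the same edge as $e_{1}$ in every coordinate, so each coordinate of $X'$ and of $Y'$ lies in $\{x_{i},y_{i}\}$. If some edge $f$ is adjacent to both, then by part~(b) the common vertex of $f$ with $e_{1}$ differs from that with $e_{2}$; say $f$ meets $e_{1}$ in $X$ and $e_{2}$ in $V\in\{X',Y'\}$. For each $i$ the pair $\{x_{i},v_{i}\}$ must be an edge of the simple graph $\Gamma_{i}$, forcing $v_{i}\neq x_{i}$ and hence $v_{i}=y_{i}$; thus $V=Y\in e_{1}\cap e_{2}$, contradicting part~(b). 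So no such $f$ exists. The paper itself does not spell out a proof, deferring to the two-factor argument of Theorem~4 in \cite{TOGN}; your write-up, once this ``twisted pair'' case is added, supplies a complete direct proof along those lines.
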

It is possible to obtain new PDAs from strong edge colorings of other tensor-like products. If we let $\Gamma_{i}=(V_{i},W_{i},E_{i})$ for each $i,1\leq i\leq l$, then we can define the graph $\Gamma_{1}\star\cdots\star\Gamma_{l}$ to be the bipartite graph $(V,W,E)$ where $V=\prod_{i=1}^{l}V_{i}$, $W=\prod_{i=l}^{l}W_{i}$ and $E=\{((x_{1},...,x_{l}),(y_{1},...,y_{l}))\in V\times W\mid (x_{i},y_{i})\in E_{i}\text{ for each }i\}$. Note that $\Gamma_{1}\star\cdots\star\Gamma_{l}$ is a subgraph of the standard tensor product $\Gamma_{1}\times\cdots\times\Gamma_{l}$. Being such, the following lemma is easily deduced.
\begin{lemma}\label{th4.1} Let $\Gamma_{i}=(V_{i},W_{i},E_{i})$ be bipartite for each $i,1\leq i\leq l$. Also, for each $i,1\leq i\leq l$, let $\mathcal{E}_{i}$ be a strong edge coloring of $\Gamma_{i}$ with colors from $S_{i}$. Then \[\mathcal{E}=\{((x_{1},...,x_{l}),(y_{1},...,y_{l}),(s_{1},...,s_{l}))\mid(x_{i},y_{i},s_{i})\in\mathcal{E}_{i}\text{ for each }i\}\] is a strong edge coloring on $\Gamma=\Gamma_{1}\star\cdots\star\Gamma_{l}$ with colors from $\prod_{i=1}^{l}\mathcal{S}_{i}$.
\end{lemma}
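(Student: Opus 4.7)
The plan is to derive Lemma \ref{th4.1} as a direct consequence of Lemma \ref{th4.2}, exactly as hinted at by the sentence immediately preceding the statement: namely, that $\Gamma_1 \star \cdots \star \Gamma_l$ is a subgraph of the standard tensor product $\Gamma_1 \times \cdots \times \Gamma_l$, and strong edge coloring is a property inherited by subgraphs.

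First I would make the ``subgraph'' claim precise. Treating each bipartite graph $\Gamma_i = (V_i, W_i, E_i)$ as an ordinary graph on the vertex set $V_i \cup W_i$ with edge set $E_i \subseteq V_i \times W_i$, the tensor product $\widetilde{\Gamma} = \Gamma_1 \times \cdots \times \Gamma_l$ has vertex set $\prod_{i=1}^l (V_i \cup W_i)$ and edges $\{((x_1,\ldots,x_l),(y_1,\ldots,y_l)) : \{x_i,y_i\} \in E_i \text{ for each } i\}$. Since every $E_i$ sits inside $V_i \times W_i$, an edge of $\widetilde{\Gamma}$ joining $(x_1,\ldots,x_l)$ to $(y_1,\ldots,y_l)$ has, for each coordinate, one endpoint in $V_i$ and the other in $W_i$; the $\star$ product retains precisely those edges for which \emph{all} $x_i$ lie in $V_i$ (and hence all $y_i$ lie in $W_i$). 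This identifies $\Gamma = \Gamma_1 \star \cdots \star \Gamma_l$ as a bipartite spanning subgraph of $\widetilde{\Gamma}$ restricted to the vertex set $\prod V_i \cup \prod W_i$.

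Next I would apply Lemma \ref{th4.2} to obtain the strong edge coloring $\widetilde{\mathcal{E}} = \{((x_1,\ldots,x_l),(y_1,\ldots,y_l),(s_1,\ldots,s_l)) : (x_i,y_i,s_i) \in \mathcal{E}_i \text{ for each } i\}$ on $\widetilde{\Gamma}$ with colors from $\prod \mathcal{S}_i$. The coloring $\mathcal{E}$ specified in the statement is then precisely the restriction of $\widetilde{\mathcal{E}}$ to the edges of $\Gamma$. Finally I would invoke the elementary fact that the strong edge coloring property descends to subgraphs: if two edges $e, e'$ of $\Gamma$ share the same color under $\mathcal{E}$ and are joined by a common edge $e''$ in $\Gamma$, then the same three edges exist in $\widetilde{\Gamma}$ and yield the same violation under $\widetilde{\mathcal{E}}$, contradicting Lemma \ref{th4.2}. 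Properness of $\mathcal{E}$ as an edge coloring follows by the same inheritance argument.

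There is no substantive obstacle here; the only point demanding care is notational, namely reconciling the ordered-triple formulation $(x_i,y_i,s_i)$ used in the bipartite setting with the unordered-pair edge formulation implicit in the tensor product of general graphs. This reconciliation is automatic once one observes that in $\Gamma$ every edge has its two endpoints in the distinguished vertex classes $\prod V_i$ and $\prod W_i$, so no ambiguity in the ordering arises. As an alternative fallback in case one prefers a self-contained argument, the two required properties of $\mathcal{E}$ can be verified directly by a coordinatewise argument mirroring the proof of Lemma \ref{th3.0}: any failure of properness or of the strong coloring condition for $\mathcal{E}$ forces, in some coordinate $i$, the corresponding failure for $\mathcal{E}_i$.
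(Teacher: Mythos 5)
Your proposal is correct and follows essentially the same route as the paper, which likewise deduces Lemma \ref{th4.1} from Lemma \ref{th4.2} by observing that $\Gamma_{1}\star\cdots\star\Gamma_{l}$ is a subgraph of the tensor product $\Gamma_{1}\times\cdots\times\Gamma_{l}$ and that the strong edge coloring property is inherited by subgraphs. Your additional care about making the subgraph identification precise (and the remark that every color in $\prod_{i}\mathcal{S}_{i}$ is still realized on the $\star$-product) only fills in details the paper leaves implicit.
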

\noindent
{\it Proof of Theorem \ref{th4.3}}. This is an immediate consequence of Lemma \ref{th4.1} and Lemma \ref{le1}. \qed\par
Here we give some examples.
\begin{example}\label{ex4.2} Let $n,a,b,t$ resp. $a',b',n',t'$ be positive integers such that $0<a,b<n$ and $0\leq t\leq \{a,b\}$, resp. $0<a',b'<n$ and $0\leq t'\leq \{a,b\}$. Define $\Gamma_{1}=(V_{1},W_{1},E_{1})$ where $V_{1}=\binom{\left[n\right]}{a},W_{1}=\binom{\left[n\right]}{b}$ and $E_{1}=\{(A,B)\in V_{1}\times W_{1}\mid |A\cap B|=t\}$, and $\Gamma_{2}=(V_{2},W_{2},E_{2})$ where $V_{2}=\binom{\left[n'\right]}{a'},W_{2}=\binom{\left[n'\right]}{b'}$ and $E_{2}=\{(A',B')\in V_{2}\times W_{2}\mid |A'\cap B'|=t'\}$. It was shown in \cite{YAN1} that, taking $\mathcal{C}_{1}=\{(D,I)\in\binom{\left[n\right]}{a+b-t}\times\binom{\left[n\right]}{t}\mid D\cap I=\emptyset\}$ and $\mathcal{C}_{2}=\{(D',I')\in\binom{\left[n'\right]}{a'+b'-t'}\times\binom{\left[n'\right]}{t'}\mid D'\cap I'=\emptyset\}$, we have that $\mathcal{E}_{1}=\{(A,B,C)\in V_{1}\times W_{1}\times\mathcal{C}_{1}\mid A\setminus B\cup B\setminus A=D\text{ and }A\cap B=I\}$ and $\mathcal{E}_{2}=\{(A',B',C')\in V_{2}\times W_{2}\times\mathcal{C}_{2}\mid A'\setminus B'\cup B'\setminus A'=D'\text{ and }A'\cap B'=I'\}$ are strong edge colorings on $\Gamma_{1}$ and $\Gamma_{2}$, respectively, with colors from $\mathcal{C}_{1}$ and $\mathcal{C}_{2}$. Then, by Lemma \ref{th4.1} we have that $\mathcal{E}=\{((A,A'),(B,B'),(C,C'))\mid (A,B,C)\in \mathcal{E}_{1}\text{ and }(A',B',C')\in \mathcal{E}_{2}\}$ is a strong edge coloring on $\Gamma=\Gamma_{1}\star\Gamma_{2}$.
\par
A PDA can be constructed from $\Gamma$ by taking $\mathcal{F}=V_{1}\times V_{2},\mathcal{K}=W_{1}\times W_{2}$ and $\mathcal{S}=\mathcal{C}_{1}\times \mathcal{C}_{2}$. Clearly the vertices in $W_{1}\times W_{2}$ have constant degree $\binom{b}{t}\binom{b'}{t'}\binom{n-b}{a-t}\binom{n'-b'}{a'-t'}$. Then the PDA will have the following parameters.
\begin{center}
\small
\begin{tabular}{c|c|c|c}
\hline
  \hline
$K$&$1-M/N$&$F$&$R$\\\hline
$\binom{n}{b}\binom{n'}{b'}$&$\binom{b}{t}\binom{b'}{t'}\binom{n-b}{a-t}\binom{n'-b'}{a'-t'}/(\binom{n}{a}\binom{n'}{a'})$&$\binom{n}{a}\binom{n'}{a'}$&$\binom{n}{a+b-2t}\binom{n-(a+b-2t)}{t}\binom{n'}{a'+b'-2t'}\binom{n'-(a'+b'-2t')}{t'}/(\binom{n}{a}\binom{n'}{a'})$\\\hline
\end{tabular}
\end{center}
If we take $n=n',a=a',b=b'=2$ and $t=t'=1$ then we obtain a $(\binom{n}{2}^{2},\binom{n}{a}^{2},\binom{n}{a}^{2}-4\binom{n-2}{a-1}^{2},(n-a)^{2}\binom{n}{a}^{2})$-PDA. If we take $n=\lambda a$ for some constant $\lambda>1$, then we get $R=S/F=((n-a)\binom{n}{a}/\binom{n}{a})^{2}=(n-a)^{2}$ and $1-M/N=1-Z/F=(2\binom{n-2}{a-1}/\binom{n}{a})^{2}\approx ((2\lambda-2)/\lambda^{2})^{2}$ (for $n$ large). By Stirling's formula we have that (for $n$ large) $K=\binom{n}{2}^{2}\approx n^{4}/4$, and \[
F=\binom{n}{\lambda^{-1}n}^{2}\approx \frac{1+o(1)}{2\pi\lambda^{-1}(1-\lambda^{-1})n}2^{2nH(\lambda^{-1})}=\mathcal{O}(K^{-1/4}2^{2\sqrt{2}K^{1/4}H(\lambda^{-1})}), 
\] where $H$ is defined as in Equation (\ref{eqa}). Note that although the rate at which $F$ grows with $K$ is still sub-exponential, it is an improvement from that of the initial (component) PDA, whose growth rate can be found in Section V of \cite{YAN1}.
\end{example}
\begin{example}\label{ex4.0} Let $\mathcal{P}$ be a $(K,F,Z,S)$ PDA and let $\Gamma_{1}=(V_{1},W_{1},E_{1})$ resp. $\mathcal{E}_{1}$ be its underlying bipartite graph resp. strong edge edge coloring with colors from $\mathcal{S}_{1}$. Note that $\mathcal{P}$ yields a caching scheme for $K$ users with ratio $M/N=Z/F$ and rate $R=S/F$. Now let $\Gamma_{2}=(V_{2},W_{2},E_{2})=K_{1,m}$ where $V_{2}=\{1\}$ and $W_{2}=\{1,...,m\}$. Since any strong coloring on $\Gamma_{2}$ must be such that no two edges can have the same color, we can take $\mathcal{E}_{2}=E_{2}$ to be the stong edge coloring on $\Gamma_{2}$. By Lemma \ref{th4.1} we have that $\mathcal{E}=\{(x,1),(y,v),s)\mid (x,y,s)\in\mathcal{E}_{2}\text{ and }(1,v)\in E_{2}\}$ is a strong edge coloring on $\Gamma=\Gamma_{1}\star\Gamma_{2}$. The correspondng PDA has parameters $(mK,F,Z,mS)$ and yields a caching scheme for $mK$ users with ratio $M/N=Z/F$ and rate $R=mS/F$. This is in fact the same PDA and caching scheme as one would obtain by using the simple grouping method introduced in \cite{SHANM}.
\end{example}
\section{A method for constructing PDAs based on strong-like products of their underlying bipartite graphs}\label{sec5}
In this section we discuss a method for constructing PDAs that involves a strong-like product of the PDAs' underlying bipartite graphs.\par The following theorem is the main result of this section.
\begin{theorem}\label{th5.2} Suppose there exists a PDA $\mathcal{P}'$ with parameters $(K',F',Z',S')$ where $Z'=F'-g'$. Let $m$ be a positive integer divisble by $6$. Then there exists a PDA $\mathcal{P}$ with parameters $(K,F,Z,S)$ where $K=mK',F=mF',Z=mF'-3g'$ and $S=8S'$.
\end{theorem}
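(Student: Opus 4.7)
The plan is to associate to $\mathcal{P}$ a bipartite graph built as a ``strong-like'' product of the underlying bipartite graph $\Gamma'$ of $\mathcal{P}'$ with a small auxiliary bipartite graph $H$, and then to invoke Lemma \ref{le1}. Concretely, I would choose $H=(V_H,W_H,E_H)$ with $|V_H|=|W_H|=m$, every vertex of $W_H$ having degree exactly $3$, and $H$ equipped with a strong edge coloring using exactly $8$ colors. The divisibility condition $6\mid m$ is used precisely to guarantee existence of such an $H$: take $H$ to be the vertex-disjoint union of $m/6$ isomorphic copies of a fixed building block $H_0$ on $6+6$ vertices, each copy right-$3$-regular and satisfying $sq(H_0)\le 8$. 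Since the copies are vertex-disjoint, the same $8$-color palette can be reused across copies to produce a strong edge coloring of $H$ with $8$ colors, and the right-degree-$3$ property is preserved.

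Given such an $H$, I would then form the tensor-like product $\Gamma=\Gamma'\star H$ of Lemma \ref{th4.1}. The left vertex set is $\mathcal{F}'\times V_H$, of size $F'\cdot m=F$, and the right vertex set is $\mathcal{K}'\times W_H$, of size $K'\cdot m=K$. Each right-vertex $(k,w)$ has degree $\deg_{\Gamma'}(k)\cdot\deg_H(w)=g'\cdot 3=3g'$, which is constant. Lemma \ref{th4.1} then yields a strong edge coloring of $\Gamma$ with $|\mathcal{S}'|\cdot 8=8S'$ colors. Invoking Lemma \ref{le1} produces a PDA with parameters $K=mK'$, $F=mF'$, $Z=F-3g'=mF'-3g'$, and $S=8S'$, matching the claim.

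The main obstacle is the very first step: exhibiting a concrete right-$3$-regular bipartite graph $H_0$ on $6+6$ vertices with strong chromatic index at most $8$, which is strictly below the generic bound $9$ obtained by coloring three disjoint perfect matchings independently (as in $K_{3,3}$). I would search for $H_0$ among small, well-structured $3$-regular bipartite graphs---for instance the prism $C_6\,\square\, K_2$, a circulant bipartite graph with connection set $\{0,1,2\}$, or the bipartite double cover of a small cubic graph---and then display an explicit partition of its $18$ edges into $8$ induced matchings (with sizes such as $(3,3,3,3,2,2,1,1)$ or $(3,3,3,2,2,2,2,1)$). Verifying the induced-matching condition edge-by-edge is the only delicate piece: for each pair of edges receiving the same color one must check that no third edge is adjacent to both. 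Once $H_0$ and its coloring are in place, the rest of the argument is the direct parameter computation sketched above, giving the theorem at once via Lemmas \ref{th4.1} and \ref{le1}.
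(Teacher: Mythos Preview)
Your reduction to Lemma \ref{th4.1} is valid and does yield the stated parameters, but the paper takes a different, more structural route. Rather than hunting for an auxiliary bipartite $H_0$, the paper introduces a new ``strong-like'' product $\Gamma_1:\Gamma_2$ of an \emph{arbitrary} graph $\Gamma_1$ with a bipartite graph $\Gamma_2$ (Lemma \ref{th5.1}): the bipartite graph has parts $V_1\times V_2$ and $V_1\times W_2$, with $((x,y),(u,v))$ an edge iff $(y,v)\in E_2$ and either $x=u$ or $\{x,u\}\in E_1$. Lemma \ref{th5.1} shows this carries a strong edge coloring with $(\chi(\Gamma_1)+2\,sq(\Gamma_1))\cdot|\mathcal{S}_2|$ colors, built from a proper vertex coloring of $\Gamma_1$ together with two oppositely oriented copies of a strong edge coloring of $\Gamma_1$. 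Theorem \ref{th5.2} then follows by taking $\Gamma_1=C_m$: since $6\mid m$ one has $\chi(C_m)=2$ and $sq(C_m)=3$, giving the factor $2+2\cdot 3=8$, while each vertex on the $W$-side has degree $(1+\deg_{C_m}(u))\cdot g'=3g'$.

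The two approaches are in fact tightly linked: the paper's product $C_m:\Gamma'$ is exactly $H\star\Gamma'$ for $H$ the bipartite circulant on $\mathbb{Z}_m+\mathbb{Z}_m$ with connection set $\{-1,0,1\}$, and Lemma \ref{th5.1} applied with $\Gamma_2=K_{1,1}$ supplies precisely the $8$-coloring of that $H$ you were looking for. So one concrete choice for your missing $H_0$ is this circulant with $m=6$; its $18$ edges split into eight induced matchings of sizes $3,3,2,2,2,2,2,2$ (the three ``diagonal'' edges of each vertex-color class, and the six oriented edge-color classes). What you gain by your route is that no new lemma is needed beyond Lemma \ref{th4.1}; what the paper gains is a general statement (Lemma \ref{th5.1}) that applies to any $\Gamma_1$, not just cycles, and produces a connected $H$ on $m+m$ vertices rather than a disjoint union of blocks.
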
 First we will need some terminology.\par
An arbitrary undirected (not necessarily bipartite) graph $\Gamma=(V,E)$ can be viewed as the directed graph $D(\Gamma)=(V,D(E))$ where $D(E)=\{\langle x,y\rangle,\langle y,x\rangle\mid \{x,y\}\in E\}$, and where $\langle x,y\rangle$ denotes the directed edge $x\rightarrow y$ in $D(\Gamma)$ (i.e., $|D(E)|=2|E|$). For an edge coloring $\mathcal{E}$ of $\Gamma$ with colors from $\mathcal{S}$, we say that $\mathcal{E}^{\circ}\subseteq D(E)\times\mathcal{S}$ is an {\it orientation} of $\mathcal{E}$ if either $(\langle x,y\rangle,s)$ or $(\langle y,x\rangle,s)$ (but not both) is a member of $\mathcal{E}^{\circ}$ whenever $(\{x,y\},s)\in \mathcal{E}$. The {\it opposing orientation} $\mathcal{E}_{\circ}$ of $\mathcal{E}^{\circ}$ is given by $\{(\langle x,y\rangle,s)\mid(\langle y,x\rangle,s)\in\mathcal{E}^{\circ}\}$. Thus we may speak of two opposing orientations of an edge coloring that have mutually disjoint sets of colors of equal cardinality, in which case it is understood that there is a bijection between the orientations preserving undirected edges and color assignment. For example, consider $\Gamma=(V,W,E)$ where $V=\{a,b,c\},W=\{x,y,z\}$, and $E=\{\{a,y\},\{a,z\},\{b,x\},\{b,z\},\{c,x\},\{c,y\}\}$. Then $\mathcal{E}=\{(a,y,1),(a,z,2),(b,x,1),(b,z,3),(c,x,2),(c,y,3)\}$ is a strong edge coloring on $\Gamma$ with colors from $\mathcal{S}=\{1,2,3\}$. Then two opposing orientations of $\mathcal{E}$ could be \begin{eqnarray*}
\mathcal{E}^{\circ} & = & \{(\langle a,y\rangle,1),(\langle a,z\rangle,2),(\langle b,x\rangle,1),(\langle b,z\rangle,3),(\langle c,x\rangle,2),(\langle c,y\rangle,3)\},\text{ and}\\
\mathcal{E}_{\circ} & = & \{(\langle y,a\rangle,1),(\langle z,a\rangle,2),(\langle x,b\rangle,1),(\langle z,b\rangle,3),(\langle x,c\rangle,2),(\langle y,c\rangle,3)\},\end{eqnarray*} and two opposing orientations with mutually disjoint sets of colors could be \begin{eqnarray*}\mathcal{E}^{\circ} & = & \{(\langle a,y\rangle,1),(\langle a,z\rangle,2),(\langle b,x\rangle,1),(\langle b,z\rangle,3),(\langle c,x\rangle,2),(\langle c,y\rangle,3)\},\text{ and}\\
\mathcal{E}_{\circ} & = & \{(\langle y,a\rangle,1'),(\langle z,a\rangle,2'),(\langle x,b\rangle,1'),(\langle z,b\rangle,3'),(\langle x,c\rangle,2'),(\langle y,c\rangle,3')\}\end{eqnarray*} (the mutually disjoint sets of colors being $\mathcal{S}^{\circ}=\{1,2,3\}$ and $\mathcal{S}_{\circ}=\{1',2',3'\}$).\par
Here we give an illustrative example.
\begin{example}\label{ex5.2} Let $\Gamma_{1}=(V_{1},E_{1})=C_{3}$ be the cycle on three vertices where $V_{1}=\{1,2,3\}$ and $E_{1}=\{\{1,2\},\{2,3\},\{3,1\}\}$. Let $\mathcal{E}_{1}=\{(\{1,2\},1),(\{2,3\},2),(\{3,1\},3)\}$ be a strong edge coloring on $\Gamma_{1}$ with colors from $\mathcal{S}^{\circ}=\{1,2,3\}$, and let $\mathcal{V}=\{(1,a),(2,b),(3,c)\}$ be a vertex coloring with colors from $\mathcal{S}'=\{a,b,c\}$.\par
For illustrative purposes, let $\mathcal{P}$ be the trivial $(2,2,1,2)$ PDA given by \[
\mathcal{P}=\left[\begin{tabular}{cc}
*&1\\
1&*\\\end{tabular}\right]_{F\times K},\] which yields a caching scheme with $M/N=Z/F=1/2$ and $R=S/F=1/2$. Take $\mathcal{K}=\{1',2'\},\mathcal{F}=\{1,2\}$ and $\mathcal{S}_{2}=\{1\}$. Let $\Gamma_{2}=(\mathcal{F},\mathcal{K},E_{2})$ resp. $\mathcal{E}_{2}=\{(1,2',1),(2,1',1)\}$ be the underlying bipartite graph resp. corresponding strong edge coloring of $\mathcal{P}$. Then $\Gamma_{1}$ and $\Gamma_{2}$ (together with their edge-colorings) have the forms given in Fig. \ref{fig4} and Fig. \ref{fig5} respectively.
\begin{figure}[b]
\begin{minipage}[c]{0.45\textwidth}
\begin{center}
\includegraphics[scale=0.35]{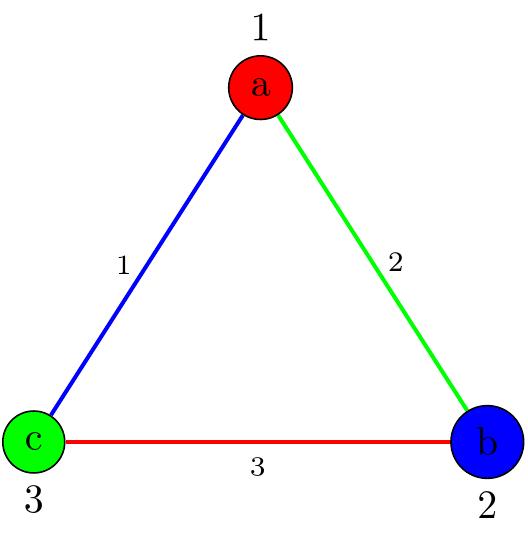}
\caption{Edge (and vertex)-colored $\Gamma_{1}$}
\label{fig4}
\end{center}
\end{minipage}%
\hspace{0.05\textwidth}%
\begin{minipage}[c]{0.35\textwidth}
\begin{center}
\includegraphics[scale=0.35]{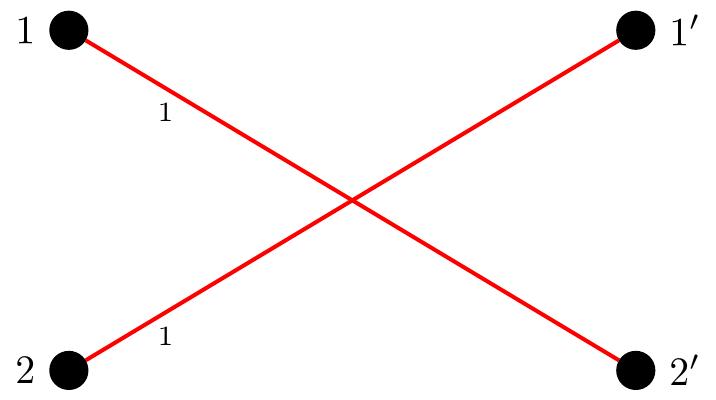}
\vspace{20px}
\caption{Edge-colored $\Gamma_{2}$}
\label{fig5}
\end{center}
\end{minipage}
\end{figure}
resectively. Consider the graph $\Gamma=(V,W,E)$ where $V=V_{1}\times\mathcal{F},W=V_{1}\times\mathcal{K}$ and $E=\{((x,u),(y,v)\in V\times W\mid (y,v)\in E_{2}\text{ and }x=u\text{ or }(x,u)\in E_{1}\}$. Note that $\Gamma$ has the form given in Fig. \ref{fig6}.
Let $\mathcal{E}^{\circ}=\{(\langle 1,2\rangle,2),(\langle 2,3\rangle,3),(\langle 3,1\rangle,1)\}$ and $\mathcal{E}_{\circ}=\{(\langle 2,1\rangle,2'),(\langle 3,2\rangle,3'),(\langle 1,3\rangle,1')\}$ be two opposing orientations of $\mathcal{E}_{1}$ with colors from $\mathcal{S}^{\circ}$ and $\mathcal{S}_{\circ}$ respectively. Notice that \[\begin{split}\mathcal{E}&=\{((x,y),(u,v),(s,s_{2}))\mid(y,v,s_{2})\in\mathcal{E}_{2};\text{ and }x=u\text{ with }(x,s)\in\mathcal{V},\text{ or }(\langle x,u\rangle,s)\in\mathcal{E}^{\circ}\cup\,\mathcal{E}_{\circ}\}\\
&=\{((1,1),(1,2'),(a,1)),((1,1),(2,2'),(2,1)),((1,1),(3,2'),(1,1)),((1,2),(1,1'),(a,1)),\\
& \quad\quad ((1,2),(2,1'),(2,1)),((1,2),(3,1'),(1,1)),((2,1),(1,2'),(2',1)),((2,1),(2,2'),(b,1)),\\
&\quad\quad((2,1),(3,2'),(3',1)),((2,2),(1,1'),(2',1)),((2,2),(2,1'),(b,1)),((2,2),(3,1'),(3,1)),\\
&\quad\quad((3,1),(1,2'),(1',1)),((3,1),(2,2'),(3',1)),((3,1),(3,2'),(c,1)),((3,2),(1,1'),(2',1)),\\
&\quad\quad((3,2),(2,1'),(3',1)),((3,2),(3,1'),(c,1))\}\end{split}\] is a strong edge coloring on $\Gamma$ with colors from $\mathcal{S}=(\mathcal{S}'\cup\mathcal{S}^{\circ}\cup\mathcal{S}_{\circ})\times\mathcal{S}_{2}$. Taking $\mathcal{K}=W,\mathcal{F}=V$ and $\mathcal{S}=\mathcal{S}$, by Lemma \ref{le1}, $\mathcal{E}$ corresponds to a $(6,6,3,9)$ PDA, which yields a caching scheme with $M/N=Z/F=1/2$ and $R=S/F=3/2$.
\end{example}
Note that, although the construction in Example \ref{ex5.2} is not exactly an instance of the construction in Theorem \ref{th5.2}, the idea behind it is the same. To prove Theorem \ref{th5.2} we will need some results about strong-like products of bipartite graphs.
\par
Let $\Gamma_{1}=(V_{1},E_{1})$ be a graph, and $\Gamma_{2}=(V_{2},W_{2},E_{2})$ be a bipartite graph. We define $\Gamma_{1}:\Gamma_{2}$ to be the bipartite graph $(V_{1}\times V_{2},V_{1}\times W_{2},E)$ where $E=\{((x,y),(u,v))\mid(y,v)\in E_{2};\text{ and }x=u\text{ or } \{x,u\}\in E_{1}\}$. In this section we will treat a vertex coloring $\mathcal{V}=\mathcal{V}(\Gamma)$ of an arbitrary graph $\Gamma=(V,E)$, with colors from $\mathcal{S}'$, as a set of ordered pairs $\{(v,s)\in V\times\mathcal{S}'\mid v\text{ has color }s\}$. We will need the following lemma.
\begin{figure}[h]
\begin{center}
\includegraphics[scale=0.35]{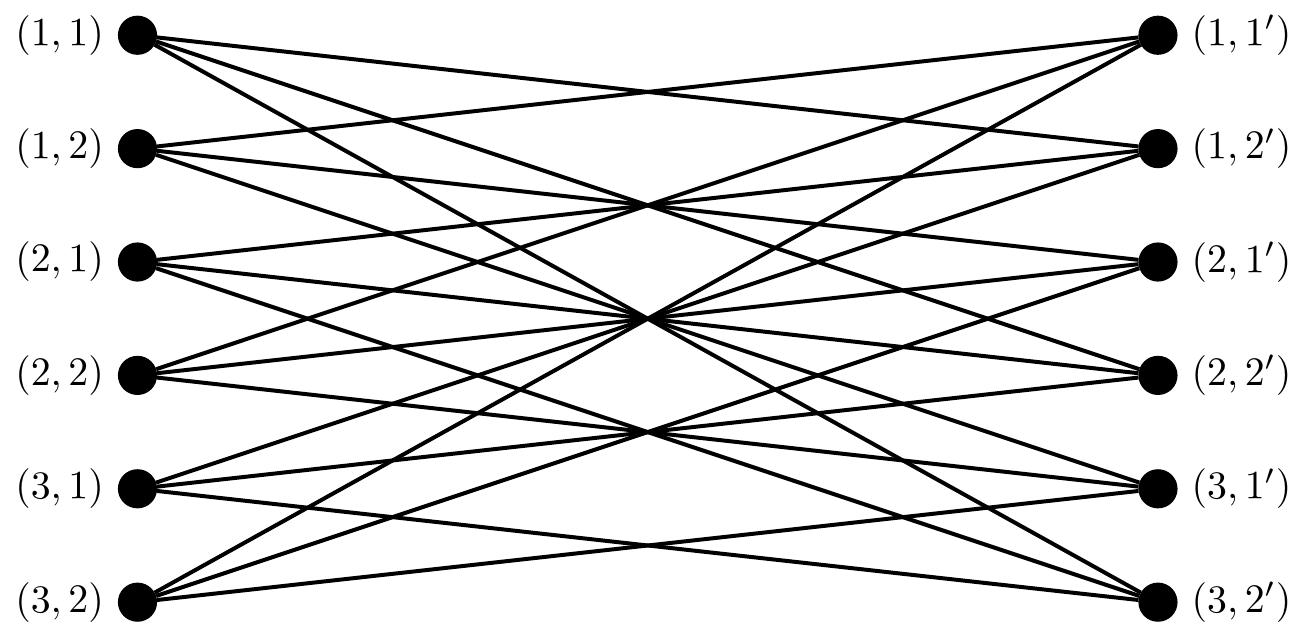}
\caption{$\Gamma$}
\label{fig6}
\end{center}
\end{figure}
\begin{lemma}\label{th5.1} Let $\Gamma_{1}=(V_{1},E_{1})$ be a graph with strong edge coloring $\mathcal{E}_{1}$, and $\Gamma_{2}=(V_{2},W_{2},E_{2})$ be a bipartite graph with strong edge coloring $\mathcal{E}_{2}$ and colors from $\mathcal{S}_{2}$. Let $\mathcal{E}^{\circ}$ and $\mathcal{E}_{\circ}$ be two opposing orientations of $\mathcal{E}_{1}$ with mutually disjoint sets of colors $\mathcal{S}^{\circ}$ and $\mathcal{S}_{\circ}$, respectively, and suppose there exists a proper vertex coloring $\mathcal{V}$ of $\Gamma_{1}$ with colors from $\mathcal{S}'$ (where $\mathcal{S}^{\circ}\cap\mathcal{S}'=\mathcal{S}_{\circ}\cap\mathcal{S}'=\emptyset$). Then \[\mathcal{E}=\{((x,y),(u,v),(s,s_{2}))\mid(y,v,s_{2})\in\mathcal{E}_{2};\text{ and }x=u\text{ with }(x,s)\in\mathcal{V},\text{ or }(\langle x,u\rangle,s)\in\mathcal{E}^{\circ}\cup\mathcal{E}_{\circ}\}\] is a strong edge coloring on $\Gamma=\Gamma_{1}:\Gamma_{2}$ with colors from $\mathcal{S}=(\mathcal{S}'\cup\mathcal{S}^{\circ}\cup\mathcal{S}_{\circ})\times \mathcal{S}_{2}$.
\end{lemma}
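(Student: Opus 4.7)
The plan is to verify the two requirements — that $\mathcal{E}$ is a (proper) edge coloring and then that it is strong — after first noting that $\Gamma = \Gamma_1 : \Gamma_2$ is bipartite by construction, with partition $V_1 \times V_2$ versus $V_1 \times W_2$. The key structural observation I will use throughout is that every edge $((x,y),(u,v))$ of $\Gamma$ falls into exactly one of two types — \emph{vertical} ($x=u$) or \emph{tilted} ($\{x,u\} \in E_1$) — which are mutually exclusive because $\Gamma_1$ is loopless, and that the disjointness $\mathcal{S}' \cap (\mathcal{S}^{\circ} \cup \mathcal{S}_{\circ}) = \emptyset$ forces any two edges of $\Gamma$ sharing a color to be of the same type.

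For the edge-coloring condition, I take two distinct edges of $\Gamma$ sharing a color $(s, s_2)$ and some endpoint, and show this is impossible. In the vertical-vertical case, the shared endpoint forces equality of the $V_1$-coordinates, and applying the edge-coloring property of $\mathcal{E}_2$ to the two triples $(y,v,s_2), (y',v',s_2) \in \mathcal{E}_2$ completes the contradiction. In the tilted-tilted case, the matching color $s$ together with $\mathcal{S}^{\circ} \cap \mathcal{S}_{\circ} = \emptyset$ forces both directed edges $\langle x, u\rangle, \langle x', u'\rangle$ to lie in the same orientation; the bijection with $\mathcal{E}_1$ then yields $\{x,u\}$ and $\{x',u'\}$ as edges of $\mathcal{E}_1$ sharing a common color, and the edge-coloring property of $\mathcal{E}_1$, together with the shared endpoint, forces $\{x,u\} = \{x',u'\}$; $\mathcal{E}_2$ then supplies the remaining equality.

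For the strong property, I assume $e_1 = ((x,y),(u,v))$ and $e_2 = ((x',y'),(u',v'))$ are distinct, share color $(s,s_2)$, and some edge $e_3$ of $\Gamma$ is adjacent to both. Bipartiteness lets me assume $e_3 = ((x,y),(u',v'))$ up to symmetry, so $(y,v') \in E_2$. This edge is adjacent in $\Gamma_2$ to both $(y,v)$ and $(y',v')$, which carry the common $\mathcal{E}_2$-color $s_2$, so the strong-coloring hypothesis on $\mathcal{E}_2$ forces $y=y'$ and $v=v'$. From here, the vertical case yields $\{x,x'\} \in E_1$ with both endpoints having $\mathcal{V}$-color $s$, contradicting properness of $\mathcal{V}$; the tilted case yields $\{x,u\}$ and $\{x',u'\}$ as distinct $\mathcal{E}_1$-edges of the same color with $\{x,u'\}$ serving as a third edge of $E_1$ adjacent to both, contradicting that $\mathcal{E}_1$ is strong.

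The hard part will be the tilted-tilted bookkeeping in the strong-coloring step: one must first observe that the orientation setup prevents $\langle x,u\rangle$ and $\langle x',u'\rangle$ from being opposite directions of the same undirected edge (which would otherwise collapse $e_1=e_2$ once $y=y'$ and $v=v'$), and then use the fact that distinct same-color edges of $\mathcal{E}_1$ cannot share a vertex to conclude that $x,u,x',u'$ are pairwise distinct — so that the candidate edge $\{x,u'\}$ is genuinely a third edge, distinct from both $\{x,u\}$ and $\{x',u'\}$ and adjacent to each, yielding the desired contradiction with the strong-edge-coloring hypothesis on $\mathcal{E}_1$.
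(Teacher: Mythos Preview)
Your proof is correct and follows essentially the same strategy as the paper's: first verify that $\mathcal{E}$ is a proper edge coloring, then that it is strong, in each step reducing to the corresponding property of $\mathcal{E}_1$, $\mathcal{E}_2$, or $\mathcal{V}$ (together with the disjoint-orientation trick to rule out the reversed-edge degeneracy). Your organization is a bit cleaner than the paper's---by noting upfront that two edges sharing the color $(s,s_2)$ must be of the same type (both vertical or both tilted) and by using the edge $(y,v')$ directly against the strong property of $\mathcal{E}_2$ without tracking the color of $e_3$---you collapse what the paper handles as a $2\times 2$ case split (type of $e_3$ vs.\ type of $e_1,e_2$) into two cases, but the substance of the argument is the same.
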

\begin{proof} Note that, without loss of generality, we may take the set of colors of $\mathcal{E}_{1}$ to be $\mathcal{S}^{\circ}$. We first show that $\mathcal{E}$ is an edge coloring on $\Gamma$. Let $((x,y),(u,v),(s,s_{2})),((x',y'),(u',v'),(s,s_{2}))\in\mathcal{E}$ agree in exactly two coordinates. Assume $(x,y)=(x',y')$. If $x=u$ then $(x,s)\in\mathcal{V}$ and $s\in\mathcal{S}'$ is a vertex color. Then, since $x=x'$ and $s\in\mathcal{S}'$, we must have $x=u=x'=u'$. Then $v\neq v'$ since $(u,v)\neq (u',v')$ by assumption. But this gives us that $(y,v,s_{2}),(y,v',s_{2})\in\mathcal{E}_{2}$, contradicting that $\mathcal{E}_{2}$ is an edge coloring. If $x\neq u$ then either $u\neq u'$ or $v\neq v'$ (or both). The former implies (by definition of $\mathcal{E}^{\circ}$ and $\mathcal{E}_{\circ}$) that $(\{x,u\},s''),(\{x,u'\},s'')\in\mathcal{E}_{1}$ for some $s''\in\mathcal{S}^{\circ}$, and the latter implies that $(y,v,s_{2}),(y,v',s_{2})\in\mathcal{S}_{2}$. In either way we get a contradiction as both $\mathcal{E}_{1}$ and $\mathcal{E}_{2}$ are edge colorings. If we assume that $(u,v)=(u',v')$ a similar contradiction can be reached.
\par
We now show that $\mathcal{E}$ is a strong edge coloring. Let \begin{equation}\label{eq5.1}((x,y),(u,v),(s,s_{2})),((x',y'),(u',v'),(s,s_{2}))\in\mathcal{E}.
\end{equation} Thus $(x',y')\neq(x,y)$ and $(u,v)\neq(u',v')$. We show there are no members $((x,y),(u',v'),(s',s'_{2}))$ or $((x',y'),(u,v),(s',s'_{2}))$ in $\mathcal{E}$ for any $(s',s'_{2})\in\mathcal{S}$.
\par
Suppose that $((x,y),(u',v'),(s',s'_{2}))\in\mathcal{E}$ for some $(s',s'_{2})\in\mathcal{S}$. Note that $(s',s'_{2})\neq(s,s_{2})$ as we have shown that $\mathcal{E}$ is an edge coloring. By definition of $\mathcal{E}$ we must have that $(y,v',s'_{2})\in\mathcal{E}_{2}$; and either $x=u'$ with $(x,s')\in\mathcal{V}$, or $(\langle x,u'\rangle,s')\in\mathcal{E}^{\circ}\cup\mathcal{E}_{\circ}$. Notice that if $y=y'$ and $s'_{2}\neq s_{2}$ then by (\ref{eq5.1}) we have $(y,v,s_{2}),(y,v',s_{2})\in\mathcal{E}_{2}$. This forces $v=v'$ since $\mathcal{E}_{2}$ is an edge coloring. But then we get that $(y,v,s_{2}),(y,v,s'_{2})\in\mathcal{E}_{2}$, a contradiction. Also notice that if $y\neq y'$ and $s'_{2}\neq s_{2}$ then by (\ref{eq5.1}) we have $(y,v,s_{2}),(y',v',s_{2}),(y,v',s'_{2})\in\mathcal{E}_{2}$, contradicting that $\mathcal{E}_{2}$ is a strong edge coloring. Thus, we may assume that $y=y'$ and $s'_{2}=s_{2}$, in which case, by (\ref{eq5.1}), we have $x'\neq x$, $s'\neq s$, and $v=v'$. We need to consider the two cases depending on whether $x=u'$ with $(x,s')\in\mathcal{V}$, or $(\langle x,u'\rangle,s')\in\mathcal{E}^{\circ}\cup\mathcal{E}_{\circ}$.
\newline
{\bf Case 1}: Assume that $x=u'$ with $(x,s')\in\mathcal{V}$. If $x=u$ and $(x,s)\in\mathcal{V}$ then we have $(x,s),(x,s')\in\mathcal{V}$, forcing $s=s'$, a contradiction.
\par
If $(\langle x,u\rangle,s)\in\mathcal{E}^{\circ}\cup\mathcal{E}_{\circ}$ and $s$ is an edge color then, by (\ref{eq5.1}), we must also have that $(\langle x',u'\rangle,s)\in\mathcal{E}^{\circ}\cup\mathcal{E}_{\circ}$. This gives us that $(\{x,u\},s),(\{x',u'\},s)\in\mathcal{E}_{1}$ which, since $x=u'$, and $\mathcal{E}_{1}$ is an edge coloring, we must also have $u=x'$. But this gives us that $(\langle x,u\rangle,s),(\langle x',u'\rangle,s)(=(\langle u,x\rangle,s))\in\mathcal{E}^{\circ}\cup\mathcal{E}_{\circ}$, which contradicts that the opposing orientations $\mathcal{E}^{\circ}$ and $\mathcal{E}_{\circ}$ have mutually disjoint sets of colors.
\newline
{\bf Case 2}: Now assume that $(\langle x,u'\rangle,s')\in\mathcal{E}^{\circ}\cup\mathcal{E}_{\circ}$. If $x=u$ and $(x,s)\in\mathcal{V}$ then $s$ is a vertex color so that, by (\ref{eq5.1}), we also have that $x'=u'$ and $(x',s)\in\mathcal{V}$. But this gives us that $(x,s),(u',s)(=(x',s))\in\mathcal{V}$ where $\{x,u'\}\in E_{1}$, which contradicts that $\mathcal{V}$ is a vertex coloring.
\par
If $(\langle x,u\rangle,s)\in\mathcal{E}^{\circ}\cup\mathcal{E}_{\circ}$ and $s$ is an edge color then, again by (\ref{eq5.1}), we must also have that $(\langle x',u'\rangle,s)\in\mathcal{E}^{\circ}\cup\mathcal{E}_{\circ}$. If $x=u'$ and $x'=u$ then we have that $(\langle x,u\rangle,s),(\langle x',u'\rangle,s)(=(\langle u,x\rangle,s))\in\mathcal{E}^{\circ}\cup\mathcal{E}_{\circ}$, which contradicts that the opposing orientations $\mathcal{E}^{\circ}$ and $\mathcal{E}_{\circ}$ have mutually disjoint sets of colors. If $\{x,u\}$ and $\{x',u'\}$ meet in exactly one point then the fact that $(\{x,u\},s),(\{x',u'\}，s)\in\mathcal{E}_{1}$ contradicts that $\mathcal{E}_{1}$ is an edge coloring. Then we must have that $x\neq u'$ and $x'\neq u$. But gives us that $(x,u,s),(x',u',s),(x,u',s')\in\mathcal{E}_{1}$, which contradicts that $\mathcal{E}_{1}$ is a strong edge coloring.
\par
If we assume that $((x',y'),(u,v),(s',s'_{2}))\in\mathcal{E}$ for some $(s',s'_{2})\in\mathcal{S}$, a similar contradiction can be reached. This completes the proof.
\end{proof}
\begin{remark} Note that the resulting strong chromatic index $sq(\Gamma_{1}:\Gamma_{2})=|(\mathcal{S}'\cup\mathcal{S}^{\circ}\cup\mathcal{S}_{\circ})\times \mathcal{S}_{2}|$ of $\Gamma_{1}:\Gamma_{2}$ reached in Lemma \ref{th5.1} exceeds that given in Theorem 5 of \cite{TOGN} for the standard strong product of graphs.
\end{remark}\noindent
{\it Proof of Theorem \ref{th5.2}}. Taking $\Gamma_{2}=(V_{2},W_{2},E_{2})$ and $\mathcal{E}_{2}$ to be the underlying bipartite graph and corresponding strong edge coloring of $\mathcal{P}'$, respectively, we can take $\Gamma_{1}$ to be $C_{m}$, i.e., the cycle on $m$ vertices. Notice $\Gamma_{1}$ has a vertex coloring $\mathcal{V}$ with colors from some $\mathcal{S}'$ where $|\mathcal{S}'|=2$, and a strong edge coloring $\mathcal{E}_{1}$ with colors from some $\mathcal{S}_{1}$ where $|\mathcal{S}_{1}|=3$. If $\mathcal{E}^{\circ}$ and $\mathcal{E}_{\circ}$ are two opposing orientations of $\mathcal{E}_{1}$ with mutually disjoint sets of colors $\mathcal{S}^{\circ}$ and $\mathcal{S}_{\circ}$, respectively (so that $|\mathcal{S}^{\circ}\cup\mathcal{S}_{\circ}|=6$), then by Lemma \ref{th5.1} we have that \[\mathcal{E}=\{((x,y),(u,v),(s,s_{2}))\mid(y,v,s_{2})\in\mathcal{E}_{2};\text{ and }x=u\text{ with }(x,s)\in\mathcal{V},\text{ or }(\langle x,u\rangle,s)\in\mathcal{E}^{\circ}\cup\mathcal{E}_{\circ}\}\] is a strong edge coloring on $\Gamma=\Gamma_{1}:\Gamma_{2}$ with colors from $\mathcal{S}=(\mathcal{S}'\cup\mathcal{S}^{\circ}\cup\mathcal{S}_{\circ})\times \mathcal{C}$. The result then follows from Lemma \ref{le1}.
\qed\par
Here we give an example.
\begin{example}\label{ex5.0} Let $a,b$ and $n$ be positive integers such that $a+b\leq n$, let $t=0$, and define $\Gamma_{2}=(V_{2},W_{2},E_{2})$, $\mathcal{E}_{2}$ and $\mathcal{C}$ as in Example \ref{ex3.0}. Let $m$ be a positive integer divisible by $6$, and let $\Gamma_{1}=C_{m}$ be the cycle on $m$ vertices. By Lemma \ref{th5.1} we have that \[\mathcal{E}=\{((y,A),(v,B),(s,C))\mid(A,B,C)\in\mathcal{E}_{1};\text{ and }y=v\text{ with }(y,s)\in\mathcal{S}',\text{ or }(\langle y,v\rangle,s)\in\mathcal{E}^{\circ}\cup\mathcal{E}_{\circ}\}\] is a strong edge coloring on $\Gamma=\Gamma_{1}:\Gamma_{2}$ with colors from $\mathcal{S}=(\mathcal{S}'\cup\mathcal{S}^{\circ}\cup\mathcal{S}_{\circ})\times \mathcal{C}$.
\par
The PDA constructed from $\Gamma$ in Theorem \ref{th5.2} will then have the following parameters.
\begin{center}
\tabcolsep=1.0cm
\small
\begin{tabular}{c|c|c|c}
\hline
  \hline
$K$&$1-M/N$&$F$&$R$\\\hline
$m\binom{n}{b}$&$(3/m)\binom{n-b}{a}/\binom{n}{a}$&$m\binom{n}{a}$&$(8/m)\binom{n}{a+b}/\binom{n}{a}$\\\hline
\end{tabular}
\end{center} This suggests that, for $m>6$, increasing the number of users by a factor of $m$ and decreasing the rate by a factor of $8/m$ can always be achieved by increasing the subpacketization level by a factor of $m$ and moderatley increasing the cache size.
\par
If we take $b=2$ then we obtain an $(m\binom{n}{2},m\binom{n}{a},m\binom{n}{a}-3\binom{n-2}{a},8\binom{n}{a+2})$-PDA. If we take $n=\lambda a$ for some constant $\lambda>1$, then we get $R=S/F=(8/m)\binom{n}{a+2}/\binom{n}{a}=(8/m)(\lambda-1)^{2}$ and $1-M/N=1-Z/F=(3/m)\binom{n-2}{a}/\binom{n}{a}\approx (3/m)((\lambda-1)/\lambda)^{2}$ (for $n$ large). By Stirling's formula we have that (for $n$ large) $K=m\binom{n}{2}\approx (m/2)n^{2}$, and \[
F=\binom{n}{\lambda^{-1} n}\approx \frac{1+o(1)}{\sqrt{2\pi\lambda^{-1}(1-\lambda^{-1})n}}2^{nH(\lambda^{-1})}=\mathcal{O}(K^{-1/4}2^{\sqrt{(2/m)K}H(\lambda^{-1})}), 
\]where $H$ is defined as in Equation (\ref{eqa}). The rate at which $F$ grows with $K$ is sub-exponential, but still is an improvement from that of the initial (component) PDA, whose growth rate can be found in Remark IV.3 of \cite{CHONG}.
\end{example}
\begin{example}\label{ex5.1} In \cite{TANG} it was shown that for any two positive integers $n>l$, taking $q$ a prime power, and $x$ the least positive integer such that $(l+1) | nx$, there exists a $(K,F,Z,S)$ PDA where $F=(q-1)q^{l}xn/(l+1),Z=F-g$ and $S=xq^{l}$ (where $g=x(q-1)q^{l-1}$). Such a PDA yields a caching scheme with $M/N=1-(l+1)/(nq)$ and $R=(l+1)/((q-1)n)$. Then applying Theorem \ref{th5.1} to such a PDA yeilds an $(mK,mF,mF-3g,8S)$ PDA whence a caching scheme with $M/N=1-(l+1)/(mnq)$ and $R=8(l+1)/(mn(q-1))$.
\end{example}
\section{Comparisons between known PDAs and the PDAs resulting from applying our method to the known PDAs}\label{sec6}
In this section we will compare the parameters of some PDAs obtained using the methods discussed in this paper to those of the component PDAs (i.e., the PDAs combined to give the new PDA). \par Here we compare the scheme obtained in Example \ref{ex3.0} to its componenet schemes, i.e., to Scheme 1 of \cite{CHONG}, with a fixed cache ratio of $M/N=3/4$.
\\
\\
\\
\begin{minipage}[c]{0.45\textwidth}
\begin{center}
\tabcolsep=0.35cm
\small
\begin{tabular}{c|c|c|c|c}
\hline
  \hline
$n\approx$&$K$&$1-M/N$&$F$&$R$\\\hline
$6\sqrt{5}$&$90$&$1/4$&$2382$&$1$\\\hline
$2\sqrt{66}$&$132$&$1/4$&$15406$&$1$\\\hline
$2\sqrt{91}$&$182$&$1/4$&$101147$&$1$\\\hline
\end{tabular}
\captionof{table}{{\scriptsize Scheme 1 of \cite{CHONG} with $b=2,n=\lambda a$ and $\lambda=2$ (and using Stirling's formula to compute $F$).}}
\end{center}
\end{minipage}%
\hspace{0.05\textwidth}%
\begin{minipage}[c]{0.45\textwidth}
\begin{center}
\tabcolsep=0.35cm
\small
\begin{tabular}{c|c|c|c|c}
\hline
  \hline
$n$&$K$&$1-M/N$&$F$&$R$\\\hline
$10$&$90$&$1/4$&$210$&$5$\\\hline
$12$&$132$&$1/4$&$792$&$7$\\\hline
$14$&$182$&$1/4$&$3003$&$8$\\\hline
\end{tabular}
\captionof{table}{{\scriptsize Example \ref{ex3.0} with $a=t=1$ and $n=\lambda b$ with $\lambda=2$}}
\end{center}
\end{minipage}
\\
\\
Comparing Tables II and III we can see that, at a cost of moderately increasing the rate we get a substantial reduction in $F$.\par
Here we compare the scheme obtained in Example \ref{ex4.2} to its component schemes, i.e., to Theorem 3 of \cite{YAN1}, with a fixed cache ratio of $M/N=3/4$.
\\
\\
\\
\begin{minipage}[c]{0.45\textwidth}
\begin{center}
\small
\begin{tabular}{c|c|c|c|c}
\hline
  \hline
$n\approx$&$K$&$1-M/N$&$F$&$R$\\\hline
$\sqrt{1568}$&$784$&$1/4$&$2598778$&$39.50$\\\hline
$\sqrt{2592}$&$1296$&$1/4$&$255881905$&$50.91$\\\hline
$\sqrt{4050}$&$2025$&$1/4$&$45902134943$&$63.64$\\\hline
\end{tabular}
\captionof{table}{{\scriptsize Theorem 3 of \cite{YAN1} with $a=2,\lambda=1,n=\mu b$ and $\mu=2\sqrt{2}+4$ (using Stirling's formula to compute $F$).}}
\end{center}
\end{minipage}%
\hspace{0.05\textwidth}%
\begin{minipage}[c]{0.45\textwidth}
\begin{center}
\tabcolsep=0.35cm
\small
\begin{tabular}{c|c|c|c|c}
\hline
  \hline
$a$&$K$&$1-M/N$&$F$&$R$\\\hline
$4$&$784$&$1/4$&$5215$&$16$\\\hline
$4.5$&$1296$&$1/4$&$18542$&$20.25$\\\hline
$5$&$2025$&$1/4$&$66754$&$25$\\\hline
\end{tabular}
\captionof{table}{{\scriptsize Example \ref{ex4.2} with $a=a',b=b'=2,t=t'=1,n=n'=\lambda a$ with $\lambda=2$.}}
\end{center}
\end{minipage}
\\
\\

Comparing Tables IV and V we can (for these particular values of $K$ and $M/N$) see a significant reduction in both $F$ and $R$. 
\par
Here we compare the scheme obtained in Example \ref{ex5.0} to a component scheme, i.e., again to Scheme 1 of \cite{CHONG}, with a fixed cache ratio of $M/N=3/4$.
\\
\\
\\
\begin{minipage}[c]{0.45\textwidth}
\begin{center}
\small
\begin{tabular}{c|c|c|c|c}
\hline
  \hline
$n\approx$&$K$&$1-M/N$&$F$&$R$\\\hline
$\sqrt{540}$&$270$&$1/4$&$1637369$&$1$\\\hline
$\sqrt{792}$&$396$&$1/4$&$44564986$&$1$\\\hline
$\sqrt{1092}$&$546$&$1/4$&$1230404836$&$1$\\\hline
\end{tabular}
\captionof{table}{{\scriptsize Scheme 1 of \cite{CHONG} with $b=2,n=\lambda b$ and $\lambda=2$ (using Stirling's formula to compute $F$).}}
\end{center}
\end{minipage}%
\hspace{0.05\textwidth}%
\begin{minipage}[c]{0.45\textwidth}
\begin{center}
\tabcolsep=0.35cm
\small
\begin{tabular}{c|c|c|c|c}
\hline
  \hline
$n$&$K$&$1-M/N$&$F$&$R$\\\hline
$10$&$270$&$1/4$&$703$&$7.77$\\\hline
$12$&$396$&$1/4$&$2152$&$7.77$\\\hline
$14$&$546$&$1/4$&$6679$&$7.77$\\\hline
\end{tabular}
\captionof{table}{{\scriptsize Example \ref{ex5.0} with $m=6,b=2$ and $n=\lambda a$ with $\lambda=2$.}}
\end{center}
\end{minipage}
\\
\\

Comparing Tables VI and VII we can see that at a cost of increasing the rate by a multiple of $7.77$ we get a significant reduction in $F$.
\par
Here we compare the scheme obtained in Example \ref{ex5.1} to a component scheme, i.e., to a scheme obtained in \cite{TANG} (see Example \ref{ex5.1} of this paper), with a fixed cache ratio of $M/N=5/8$.
\\
\\
\\
\begin{minipage}[c]{0.45\textwidth}
\begin{center}
\small
\begin{tabular}{c|c|c|c|c}
\hline
  \hline
$(n,q,l,x)$&$K$&$1-M/N$&$F$&$R$\\\hline
$(24,2,17,3)$&$48$&$3/8$&$2^{19}$&$3/4$\\\hline
$(60,2,44,4)$&$120$&$3/8$&$2^{46}$&$3/4$\\\hline
$(96,2,71,3)$&$192$&$3/8$&$2^{73}$&$3/4$\\\hline
\end{tabular}
\captionof{table}{{\scriptsize Scheme obtained in \cite{TANG} (see Example \ref{ex5.1} of this paper).}}
\end{center}
\end{minipage}%
\hspace{0.05\textwidth}%
\begin{minipage}[c]{0.45\textwidth}
\begin{center}
\tabcolsep=0.35cm
\small
\begin{tabular}{c|c|c|c|c}
\hline
  \hline
$(n,q,l,x)$&$K$&$1-M/N$&$F$&$R$\\\hline
$(4,2,5,3)$&$48$&$3/8$&$3\cdot 2^{7}$&$2$\\\hline
$(10,2,14,3)$&$120$&$3/8$&$3\cdot 2^{16}$&$2$\\\hline
$(16,2,23,3)$&$192$&$3/8$&$3\cdot 2^{25}$&$2$\\\hline
\end{tabular}
\captionof{table}{{\scriptsize Example \ref{ex5.0} with $m=6,b=2$ and $n=\lambda a$ with $\lambda=2$.}}
\end{center}
\end{minipage}
\\
\\
Comparing Tables VIII and IX we can see that at a cost of increasing the rate by a multiple of $3/2$ we get a significant reduction in $F$.

\section{Conclusion}\label{sec7}
We have considered various methods of combining one or more strong edge colorings for the purpose of obtaining new strong edge colorings of bipartite graphs and therefore, new PDAs. The methods we have considered mostly involve various types of products of graphs. We have also investigated how the parameters of certain strong edge colorings (hence certain PDAs) change after being combined with others, and have compared the parameters of the resulting PDAs with those of known ones.


%





\ifCLASSOPTIONcaptionsoff
  \newpage
\fi


\begin{thebibliography}{1}

\bibitem{AGR}
S.~Agrawal, K.~V.~Sushena Sree, and P.~Krishnan.
\newblock ``Coded caching based on combinatorial designs'' in
\newblock {\em 2019 IEEE International Symposium on Information Theory (ISIT)}, Paris, France, 2019, pp. 1227--1231.

\bibitem{CHEN0}
M.~Cheng, J.~Jiang, Q. Wang, and Y.~Yao.
\newblock A Generalized Grouping Scheme in Coded Caching.
\newblock {\em IEEE Trans. Commun.}, 67(5):3422--3430, 2019.

\bibitem{CHEN}
M.~Cheng, J.~Li, and X.~Tang.
\newblock Linear coded caching scheme for centralized networks.
\newblock {\em arXiv: 1810.06017v2}, 2018.

\bibitem{JI}
M.~Ji, G.~Caire, and A.~F. Molisch.
\newblock Fundamental limits of caching in {D}2{D} networks.
\newblock {\em IEEE Trans. Inform. Theory}, 62(2):849--869, 2016.

\bibitem{KAR}
N.~Karamchandani, U.~Niesen, M.~A. Maddah-Ali, and S.~N. Digavi.
\newblock Hierarchical coded caching.
\newblock {\em IEEE Trans. Inform. Theory}, 62(6):3212--3229, 2016.

\bibitem{KRISH}
P.~Krishnan.
\newblock ``Coded caching via line graphs of bipartite graphs'' in
\newblock {\em 2018 IEEE Inform. Theory Workshop (ITW)}, Guangzhou, China, 2018, pp. 1--5.

\bibitem{LAMP}
E. Lampiris and P. Elia.
\newblock Adding  transmitters  dramatically  boosts  coded-caching  gains  for  finite  file  sizes.
\newblock {\em IEEE  Journal  on  Selected  Areas  in Commun.}, 36(6):1176--1188, 2018.

\bibitem{AN}
M.~A. Maddah-Ali and U.~Niesen.
\newblock Fundamental limits of caching.
\newblock {\em IEEE Trans. Inform. Theory}, 60(5):2856--2867, 2014.

\bibitem{NMA}
U.~Niesen and M.~A. Maddah-Ali.
\newblock Coded cashing with nonuniform demands.
\newblock {\em IEEE Trans. Inform. Theory}, 63(2):1146--1158, 2017.

\bibitem{OZ}
E. Ozfatura and D. Gunduz.
\newblock ``Mobility-aware coded storage and delivery'' in
\newblock {\em 22nd International ITG Workshop on Smart Antennas}, Bochum, Germany, 2018, pp. 1--6.

\bibitem{PED}
R.~Pedarsani, M.~A. Maddah-Ali, and U.~Niesen.
\newblock Online coded caching.
\newblock {\em IEEE/ACM Trans. Netw.}, 24(2):836--845, 2016.

\bibitem{CHONG}
C.~Shanguan, Y.~Zhang, and G.~Ge.
\newblock Centralized coded caching schemes: A hypergraph theoretical approach.
\newblock {\em IEEE Trans. Inform. Theory}, 64(8):5755--5766, 2018.

\bibitem{SHANM0}
K. Shanmugam, A. G. Dimakis, J. Llorca and A. M. Tulino. 
\newblock ``A unified {R}uzsa-{S}zemerédi framework for finite-length coded caching'' in
\newblock {\em 2017 51st Asilomar Conference on Signals, Systems, and Computers}, Pacific Grove, CA, 2017, pp. 631-635.

\bibitem{SHANM1}
K. Shanmugam, A. G. Dimakis, J. Llorca and A. M. Tulino.
\newblock ``Coded caching with linear subpacketization is possible using {R}uzsa-{S}zemerédi graphs'' in
\newblock {\em 2017 IEEE International Symposium on Information Theory (ISIT)}, 2017, 1237--1241.

\bibitem{SHANM}
K. Shanmugam, A. M. Tulino, J. Llorca, and A. G. Dimakis.
\newblock Finite-length analysis of caching-aided coded multicasting.
\newblock {\em IEEE Trans. Inform. Theory}, 62(10):5524--5537, 2016.

\bibitem{TANG}
L. Tang and A. Ramamoorthy.
\newblock Coded caching schemes with reduced subpacketization from linear block codes.
\newblock {\em IEEE Trans. Inform. Theory}, 64(4):3099--3120, 2018.

\bibitem{TOGN}
O.~Togni.
\newblock Strong chromatic index of products of graphs.
\newblock {\em Discrete Math. Theor. Comput. Sci.}, 9:47--56, 2007.

\bibitem{YAN0}
Q.~Yan, M.~Cheng, X.~Tang, and Q.~Chen.
\newblock On the placement delivery array design for centralized coded caching
  scheme.
\newblock {\em IEEE Trans. Inform. Theory}, 63(9):5821--5833, 2017.

\bibitem{YAN1}
Q.~Yan, X.~Tang, Q.~Chen, and M.~Cheng.
\newblock Placement delivery array design through strong edge coloring of
  bipartite graphs.
\newblock {\em IEEE Comm. Letters}, 22(2):236--239, 2018.

\end{thebibliography}
\end{document}